\newcommand {\R}{\mathbb{R}}
\newcommand {\Z}{\mathbb{Z}}
\newcommand {\C}{\mathbb{C}}
\newcommand{\la}{\lambda}
\newcommand{\re}{\operatorname{Re}}
\newcommand{\im}{\operatorname{Im}}
\newcommand{\sumast}{\mathop{\sum\nolimits^{\mathrlap{\ast}}}}
\newcommand{\Mod}[1]{\ (\mathrm{mod}\ #1)}
\newtheorem{thm}{Theorem}[section]
\newtheorem{lemma}[thm]{Lemma}
\newtheorem{prop}[thm]{Proposition}
\newtheorem{cor}[thm]{Corollary}
\newtheorem{remark}{Remark}
\newtheoremstyle{named}{}{}{\itshape}{}{\bfseries}{.}{.5em}{\thmnote{#3}}
\theoremstyle{named}
\newtheorem*{namedthm}{Theorem}
\numberwithin{equation}{section}
\DeclareMathOperator{\sgn}{sgn}
\title{Character sum, reciprocity and Voronoi formula}
\author{Chung-Hang Kwan}
\address{University College London}
\email{ucahckw@ucl.ac.uk}
\author{Wing Hong Leung}
\address{Rutgers University}
\email{joseph.leung@rutgers.edu}
\date{}
\begin{document}

\begin{abstract}
We prove a novel four-variable character sum identity which serves as a twisted, non-archimedean counterpart to Weber's integrals for Bessel functions. Using this identity and ideas from Venkatesh's thesis,  we present a new, spectral proof of the Voronoi formula for classical modular forms. 
\end{abstract}

\subjclass[]{11F11, 11F12, 11F66, 11F72, 11L05}
\keywords{Modular form, automorphic form, $L$-function,  functional equation, Voronoi formula,   Petersson trace formula, character sum, Kloosterman sum, Gauss sum,  spectral identity, Analytic Number Theory}

\maketitle

\section{Introduction}

\subsection{Character sums and modular forms}  



Character sums of various forms have been instrumental in the development of number theory. A notable example is Kloosterman's 1926 work on the circle method and representation of natural numbers by quadratic forms, where the following character sum, now known as the \emph{Kloosterman sum}, proved fundamental:
\begin{align}
    S(m,n;c)  := \sideset{}{^*}{\sum}\limits_{x(\bmod\, c)} \, e\left(\frac{mx+n \, \overline{x}}{c}\right), 
\end{align}
where  $m, n, c$ are integers and $c\ge 1$.  His method can be applied more generally to estimate the sizes of Fourier coefficients of holomorphic modular forms, providing improvements over Hecke's bound; see \cite[Chp. 20.3]{IK} and  \cite[Sect. 3.4]{BB13}. The relationship between the Kloosterman sum and modularity can also be understood through \emph{trace formulae} after Petersson and Selberg; see \cite[Chp. 14]{IK}.


Another intriguing connection was revealed by Kuznetsov \cite[Thm. 4 and Sect. 7.1]{Kuz} while developing his celebrated trace formula.  He proved the remarkable identity 
\begin{align}\label{Selb}
    S(m,n;c) =   \sum_{d\mid (m,n, c)}  S(1, mn/d^2; c/d)d,
\end{align}
by applying Hecke's multiplication rule
\begin{align}
    \lambda_{F}(m) \lambda_{F}(n)  =  \sum_{d\mid (m, n)}  \lambda_{F}(mn/d^2)
\end{align}
to his formula, where $\lambda_{F}(\,\cdot\,)$ denotes the normalized Hecke eigenvalues of a cuspidal eigenform $F$ of level $1$. Historically, (\ref{Selb})
was discovered by Selberg in the 1930s,  but no proof was provided. As noted in \cite[p. 2318]{BK-JFA}, a direct proof of (\ref{Selb}) has resisted straightforward attempts.  



Identities of character sums also serve as essential local inputs in the arithmetic and analytic aspects of modular forms (or more generally, automorphic forms) via trace formulae. For instance, Iwaniec \cite{Iw87} obtained an averaged form of Waldspurger/Kohnen--Zagier's formula through a delicate identity relating Kloosterman sums to Sali\'e sums, which can be considered as a non-archimedean analogue of the integral identities of Bessel functions due to Hardy and Weber (see \cite[Lem. 4]{Iw87},  \cite[Thm. 1.1, Cor. 8.4, Sects. 18 and 22]{BM05} and \cite{CQ20}). As another example, Duke--Iwaniec \cite{DI93} discovered a curious identity between the Kloosterman sum and a cubic exponential sum, which plays an important role in the cubic liftings of \cite{MR99, FO22}. This identity is a non-archimedean version of Nicholson's identity for the Bessel and Airy functions. 

In this work, we prove a new reciprocity identity of a four-variable character sum associated with a Dirichlet character $\chi\, (\bmod\, q)$.  This result is the intrinsic form of an identity of Kloosterman sums (Prop. \ref{cor:CharSumId}) which should be interpreted as a twisted, non-archimedean counterpart of another Weber's integral identity (eq. (\ref{Web})).

\begin{thm}\label{Thm:CharSum} Let $q, \ell \ge 1$, $a, b, u, v$ be integers such that $ab|q^\infty$ and $(uv,q)=1$. \footnote{ The notation $c\mid q^{\infty}$ refers to $c\mid q^{k}$ for some $k\ge 1$. } Let $\chi\, (\bmod\, q)$ be a Dirichlet character. Define
\begin{align}\label{keyacharsum}
    \mathcal{C}_\chi^\ell(a,u,b,v) \ := \ \chi(u) \sumast_{\substack{\alpha\, (\bmod\, a)\\ bv\equiv-\alpha q\, (\bmod\, a)}}e\left(\frac{\ell\overline{\alpha u}}{a}\right)\overline{\chi}\left(\frac{\alpha q+bv}{a}\right). 
\end{align}
Then we have
\begin{align}\label{IntroCharId1}
    \mathcal{C}_\chi^\ell(a,u,b,v) \ = \ e\left(-\frac{\ell q\overline{uv}}{ab}\right) \, \overline{\mathcal{C}_\chi^\ell(b,v,a,u)}.
\end{align}
\end{thm}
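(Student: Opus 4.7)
The strategy is to exhibit a bijection between the summation sets on the two sides of \eqref{IntroCharId1} and verify that matched summands coincide. First I would reparametrize the sums: setting $A:=(\alpha q+bv)/a$ recasts the left-hand sum as a sum over integer pairs $(\alpha,A)$ solving the Bezout-type relation $Aa-\alpha q=bv$ with $(\alpha,a)=(A,q)=1$, and symmetrically $B:=(\beta q+au)/b$ recasts the right-hand sum via $Bb-\beta q=au$ with $(\beta,b)=(B,q)=1$. Since $ab\mid q^{\infty}$, every prime dividing $ab$ divides $q$; writing $q=\prod p_i^{e_i}$, $a=\prod p_i^{f_i}$, $b=\prod p_i^{g_i}$ and decomposing $\chi=\prod_i\chi_{p_i}$ by the Chinese Remainder Theorem, both sides factorize into local prime-power pieces, reducing matters to the case $q=p^N$, $a=p^s$, $b=p^t$.

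Next I would construct the bijection. Multiplying the two Bezout relations yields
\[
(Aa)(Bb)=\alpha\beta q^{2}+q(\alpha au+\beta bv)+abuv,
\]
which forces $AB\equiv uv\pmod{q/\gcd(q,ab)}$. In the prime-power case I would define the bijection by imposing the stronger $AB\equiv uv\pmod{p^N}$, i.e.\ $B\equiv uv\,\overline{A}\pmod{p^N}$; combined with $Bb=\beta q+au$, this pins down $\beta\pmod b$ uniquely. Verifying that this procedure sends $(\alpha,A)$-solutions to $(\beta,B)$-solutions and that the resulting correspondence is bijective requires a case analysis based on the relative values of $s$, $t$, $N$, with cardinality counts confirming equality of the two index sets in each subcase.

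With the bijection in hand, the equality of matched summands reduces to two verifications. The character factors reconcile via $\chi(B)=\chi(uv\overline{A})=\chi(uv)\overline{\chi(A)}$, which combines with the external $\chi(u)$ and $\overline{\chi(v)}$ to cancel correctly. The exponential factors agree provided the reciprocity relation
\[
\frac{\overline{\alpha u}}{a}+\frac{\overline{\beta v}}{b}+\frac{q\,\overline{uv}}{ab}\in\Z,
\]
i.e.\ $b\,\overline{\alpha u}+a\,\overline{\beta v}+q\,\overline{uv}\equiv 0\pmod{ab}$, holds under the bijection. Reducing modulo $a$ (resp.\ modulo $b$) this is automatic from the Bezout constraint $\alpha q\equiv-bv\pmod a$ (resp.\ $\beta q\equiv-au\pmod b$); the remaining content modulo $\gcd(a,b)$ is supplied by the bijection relation $AB\equiv uv\pmod{p^N}$.

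The hard part will be the case analysis within the prime-power reduction: depending on whether $s<N$, $s=N$, or $s>N$ (and similarly for $t$), the summation sets are restricted by further divisibility conditions (for example, $\beta\equiv-u\pmod{p^{t-N}}$ when $t>N$), and consistent integer lifts of the modular inverses must be chosen so that the reciprocity identity can be verified uniformly. Once this bookkeeping is in place, the character and exponential matchings follow cleanly from the observations above.
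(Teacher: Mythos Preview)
Your proposal is correct and is essentially the paper's approach: reduce to prime powers $q=p^{N}$, $a=p^{s}$, $b=p^{t}$, show both sides vanish outside the cases $s=t\le N$, $t=N<s$, $s=N<t$, and in each surviving case exhibit an explicit bijection between the two index sets. The paper's changes of variables (for instance $\beta=(\overline{\alpha p^{N-s}+1}-1)/p^{N-s}$ when $s=t\le N$) unwind precisely to your relation $B\equiv uv\,\overline{A}\pmod{q}$, so your bijection is a clean conceptual repackaging of the same computation. Two small cautions: the CRT reduction is a \emph{twisted} multiplicativity carrying extra character factors that must be matched on both sides together with an additive-reciprocity splitting of $e(-\ell q\,\overline{uv}/ab)$; and in the prime-power setting your ``verify mod $a$, mod $b$, then fill mod $\gcd(a,b)$'' scheme for the exponential identity does not quite apply (since $a$ and $b$ share every prime), so the full congruence mod $p^{s+t}$ really must be extracted from the bijection itself, case by case---which is exactly the work you anticipate as ``the hard part.''
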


Our identity (\ref{IntroCharId1}) features the switching of parameters $a \leftrightarrow b$ and $u\leftrightarrow v$, despite their distinct roles in (\ref{keyacharsum}). Similar to the instances discussed earlier, (\ref{IntroCharId1}) has applications to modular forms, as discussed in the next section. 


\subsection{Global results}


There has been considerable interest in deriving the analytic continuation and functional equations of $L$-functions associated with modular forms \emph{directly} from their Dirichlet series, avoiding the traditional reliance on integral representations as in the cases of Hecke, Rankin and Selberg. In his seminal 1977 proceedings \cite[pp. 141--142]{Zag77}, Zagier suggested an approach based on the property that the set of holomorphic Poincar\'{e} series spans the linear space of holomorphic cusp forms. This idea was later implemented in two different cases by Goldfeld \cite{Go79} and Mizumoto \cite{Miz84}. The method has since found various arithmetic applications as explained in e.g.,  Goldfeld--Zhang \cite{GZ00}, Nelson \cite{Nel13}, and Diamantis--O'Sullivan \cite{DO10}.



In his thesis \cite[Chp. 3]{Ven-thesis}, Venkatesh gave an elegant new proof of the classical converse theorem for holomorphic modular forms of level $1$, which may be viewed as a limiting variant of \cite[Sect. 2]{Go79} (cf. \cite{BFL23}). Several aspects of his approach are distinctive both conceptually and technically.  Venkatesh circumvented the need for the aforementioned property of the holomorphic Poincar\'{e} series with a very neat argument. In fact, his method works equally well for non-holomorphic modular forms by utilizing the Kuznetsov trace formula in place of the Petersson trace formula which he employed in the holomorphic case. Another idea from \cite{Ven-thesis}  is to begin with Dirichlet polynomials instead of working directly with $L$-functions. Recent works of Booker--Farmer--Lee \cite{BFL23} and Blomer--Leung \cite{BL24} have yielded more general converse theorems using this approach.



In this article, we employ Venkatesh's ideas to provide new, spectral proofs of the functional equation for the twisted Hecke $L$-function and the Voronoi formula for holomorphic cusp forms \footnote{attributed to Wilton in \cite[Sect. 3]{MS04}. }, in line with the philosophy of ``beyond endoscopy'' (see \cite{Ven04}). We derive these results from a spectral identity for averages of smooth Dirichlet polynomials (Thm. \ref{ourmainthmtwist}), where Thm. \ref{Thm:CharSum} plays a significant role. Our work also uses the circle of techniques under  ``spectral reciprocity'' (e.g., \cite{BK-duke, BK-JFA}), but with several important variations as we will discuss in Sect. \ref{roadmap}.




Our main results are described as follows. Let $q\ge 1$ be an integer, $e(z):= e^{2\pi i z}$, $e_{q}(z):= e^{2\pi i z/q}$, and $\chi\, (\bmod\, q)$ be a primitive Dirichlet character. The Gauss sum associated with $\chi$ is defined by
\begin{align}\label{gaussum}
    \epsilon_\chi  :=  \frac{1}{\sqrt{q}}\sum_{\alpha\Mod{q}}\chi(\alpha)e_q(\alpha).
\end{align}
Suppose $\mathcal{B}_{k}(1)$ is an orthogonal basis of holomorphic cusp forms of level $1$ and weight $k$, for which the Fourier expansion of $f\in \mathcal{B}_{k}(1)$ is given by
\begin{align}
    f(z) \, &= \,  \sum_{n= 1}^{\infty} \, \lambda_{f}(n)n^{\frac{k-1}{2}} e(nz), 
\end{align}
where $\lambda_{f}(1)=1$,  $ z=x+iy$ with  $x,y\in\R$ and $y>0$. The harmonic weighting refers to
\begin{align*}
    \sideset{}{^h}{\sum}_{f \in \mathcal{B}_{k}(1)}  \alpha_{f} \,  := \,   \frac{(4\pi)^{k-1}}{ \Gamma(k-1)}  \sum_{f \in \mathcal{B}_{k}(1)}  \, \frac{\alpha_{f}}{||f||^{2}}. 
\end{align*}
For $\re s  \gg  1$, the twisted Hecke $L$-function associated with $f$ and $\chi$ is defined by the Dirichlet series
\begin{align}
	L(s,f\times\chi ) \, &:= \, \sum_{n= 1}^{\infty} \frac{\lambda_{f}(n)\chi(n)}{n^s}. \label{holoDS}
\end{align} 
Denote by  $J_{k-1}(\cdot)$  the $J$-Bessel function, and let
\begin{align}\label{gl2gammfac}
 	\gamma_{k}(s) \ := \  2^{(3-k)/2} \sqrt{\pi}\,  (2\pi)^{-s} \, \Gamma\left(s+ \frac{k-1}{2}\right) \ =  \ \pi^{-s}\,  \Gamma\left(\frac{s+\frac{k-1}{2}}{2}\right) \Gamma\left(\frac{s+\frac{k+1}{2}}{2}\right).
 \end{align}

\begin{thm}\label{ourmainthmtwist}
For any $\ell\geq1$ and $g\in C_{c}^{\infty}(0,\infty)$, we have
\begin{align}\label{mainglobid}
  \sideset{}{^h}{\sum}_{f \in \mathcal{B}_{k}(1)}  \lambda_{f}(\ell) \sum_{n=1}^{\infty} \,  \lambda_{f}(n)\chi(n) g(n) =
  2\pi i^{k}\frac{\epsilon_\chi^2}{q}\ \ \sideset{}{^h}{\sum}_{f \in \mathcal{B}_{k}(1)} \ \lambda_{f}(\ell)\sum_{n=1}^\infty\,  \lambda_{f}(n)\overline{\chi}(n)	\int_{0}^{\infty}   g(y)J_{k-1}\left(\frac{4\pi\sqrt{ny}}{q}\right)  dy. 
\end{align}    
\end{thm}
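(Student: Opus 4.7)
My plan is to derive (\ref{mainglobid}) by applying the Petersson trace formula to the left-hand side, dualising the resulting oscillatory sum in $n$ by Poisson summation, and then exploiting the four-variable character sum reciprocity of Theorem \ref{Thm:CharSum}, in the spirit of ``spectral reciprocity'' as in \cite{BK-duke, BK-JFA}. The Petersson trace formula for $\mathcal{B}_k(1)$ reads
\begin{equation}\label{plan:Pet}
\sideset{}{^h}\sum_{f \in \mathcal{B}_k(1)} \lambda_f(\ell)\,\lambda_f(n) \ = \ \delta_{n=\ell} \ + \ 2\pi\, i^{k} \sum_{c \geq 1} \frac{S(\ell, n; c)}{c}\, J_{k-1}\!\left(\frac{4\pi\sqrt{\ell n}}{c}\right),
\end{equation}
so after swapping orders of summation the LHS of (\ref{mainglobid}) equals $\chi(\ell)\, g(\ell)$ plus the Kloosterman contribution
\begin{equation*}
2\pi\, i^{k} \sum_{c \geq 1} \frac{1}{c} \sum_{n \geq 1} \chi(n)\, g(n)\, S(\ell, n; c)\, J_{k-1}\!\left(\frac{4\pi\sqrt{\ell n}}{c}\right).
\end{equation*}

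I would then open the character via $\chi(n) = (\sqrt{q}\, \epsilon_{\overline{\chi}})^{-1} \sum_{\alpha\,(\bmod\, q)} \overline{\chi}(\alpha)\, e_{q}(\alpha n)$ (valid for primitive $\chi$) and expand $S(\ell, n; c) = \sumast_{x\,(\bmod\, c)} e_{c}(\ell x + n \overline{x})$, so that the $n$-factor becomes $g(n)\, J_{k-1}(4\pi\sqrt{\ell n}/c)$ twisted by $e(n(\alpha c + q\overline{x})/(cq))$. Factoring $c = ab$ with $a \mid q^{\infty}$ and $(b, q) = 1$, in line with the hypothesis of Theorem \ref{Thm:CharSum}, and applying Poisson summation in $n$ modulo the combined denominator produces a dual integer $m \geq 1$ together with the Fourier-dual integral
\begin{equation*}
\int_{0}^{\infty} g(y)\, J_{k-1}\!\left(\frac{4\pi \sqrt{\ell y}}{c}\right)\, e(-\xi y)\, dy,
\end{equation*}
for an explicit rational $\xi = \xi(m, a, b, \alpha, x)$. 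A classical Weber/Hankel integral identity for $J_{k-1}$, justified cleanly via Mellin inversion using the gamma factors $\gamma_k(s)$ of (\ref{gl2gammfac}), evaluates this integral as a multiple of $\int_{0}^{\infty} g(y)\, J_{k-1}(4\pi\sqrt{m y}/q)\, dy$, which is exactly the archimedean weight on the RHS of (\ref{mainglobid}).

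The residual finite sum over $\alpha\,(\bmod\, q)$ and $x\,(\bmod\, c)$ can then be organised into the character sum $\mathcal{C}_{\chi}^{\ell}(a, u, b, v)$ of (\ref{keyacharsum}), with $(a, u)$ and $(b, v)$ encoding, respectively, the Kloosterman-side data $(c = ab, \overline{x})$ and the dual Gauss-sum-side data $(q, m)$, the integers $u, v$ supplying the coprime-to-$q$ parts. Invoking the reciprocity (\ref{IntroCharId1}) switches the two pairs and introduces the phase $e(-\ell q \overline{uv}/(ab))$, which combines with the Kloosterman exponential to reassemble a Kloosterman sum $S(\ell, m; c')$ now twisted by $\overline{\chi}(m)$. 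The two Gauss-sum factors --- $\epsilon_{\overline{\chi}}^{-1}$ from opening $\chi$, and an $\epsilon_{\chi}$ implicit in $\overline{\mathcal{C}_{\chi}^{\ell}}$ --- combine into the exact constant $\epsilon_{\chi}^{2}/q$ on the RHS. Running Petersson (\ref{plan:Pet}) in reverse on the reassembled Kloosterman sum reconstructs the spectral average $\sideset{}{^h}\sum_{f\in\mathcal{B}_k(1)} \lambda_f(\ell) \sum_m \lambda_f(m)\,\overline{\chi}(m) \int_{0}^{\infty} g(y)\, J_{k-1}(4\pi\sqrt{my}/q)\,dy$ appearing on the RHS of (\ref{mainglobid}).

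The main obstacle is the meticulous bookkeeping of moduli and coprimality conditions: the decomposition $c = ab$ and the dual Poisson modulus must be aligned with the hypothesis $ab \mid q^{\infty}$, $(uv, q) = 1$ of Theorem \ref{Thm:CharSum}, and the archimedean Bessel integral must be shown to produce $J_{k-1}(4\pi\sqrt{my}/q)$ with precisely $q$ (rather than a mixed modulus) in the denominator. A further delicate point is reconciling the diagonals arising from the forward and reverse Petersson steps: the LHS contributes $\chi(\ell)g(\ell)$, while the reverse step contributes $2\pi i^{k}(\epsilon_{\chi}^{2}/q)\,\overline{\chi}(\ell)\int_{0}^{\infty} g(y)\, J_{k-1}(4\pi\sqrt{\ell y}/q)\,dy$, and these contributions must balance either directly or through a boundary term that emerges when the parameters of Theorem \ref{Thm:CharSum} degenerate (for instance when the dual integer $m$ coincides with $\ell$).
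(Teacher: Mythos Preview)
Your overall architecture --- Petersson, Poisson, character-sum reciprocity, reverse Petersson --- matches the paper's, but two structural errors would prevent your argument from closing.

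First, you have misread the hypothesis of Theorem~\ref{Thm:CharSum}: it requires $ab\mid q^\infty$, so \emph{both} $a$ and $b$ are $q$-power integers, while $u,v$ are coprime to $q$. Your proposed factorisation $c=ab$ with $a\mid q^\infty$, $(b,q)=1$ therefore cannot be plugged into $\mathcal{C}_\chi^\ell(a,u,b,v)$ as written. In the paper one factors $c=c_0c'$ \emph{and} the dual variable $m=m_0m'$ (each into a $q$-part and a coprime-to-$q$ part), and the character sum that emerges is $\mathcal{C}_\chi^\ell(c_0,c',m_0,m')$, i.e.\ $(a,u)=(c_0,c')$ and $(b,v)=(m_0,m')$. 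The reciprocity (\ref{IntroCharId1}) then swaps the roles of the $c$-decomposition and the $m$-decomposition, not of ``Kloosterman data'' and ``Gauss data'' as you describe.

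Second, and more seriously, you are missing an entire Poisson step. The Weber--Hankel manoeuvre does \emph{not} produce $\int_0^\infty g(y)J_{k-1}(4\pi\sqrt{my}/q)\,dy$ directly: after Hankel inversion and Weber's identity one obtains (see (\ref{SkAfterAnalyticPrep}))
\[
\int_0^\infty (\mathcal{H}_k g)(x/q^2)\,J_{k-1}\!\left(\frac{4\pi\sqrt{\ell x}}{m}\right)e\!\left(\frac{cx}{qm}\right)dx,
\]
with the Bessel argument involving $m$ (not $q$) and the test function replaced by its Hankel transform. The crucial point is that this removes the apparent $1/c^2$ singularity and leaves a sum over $c\in\Z$ ready for a \emph{second} Poisson summation in $c$ (Sect.~\ref{reversPois}). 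Only after that second Poisson does one recover $\int_0^\infty g(y)J_{k-1}(4\pi\sqrt{cy}/q)\,dy$ and a genuine Kloosterman sum $S(c,\ell;m)$ to which reverse Petersson applies. The diagonal bookkeeping you flag is then resolved concretely: the first-Petersson diagonal $g(\ell)\chi(\ell)$ cancels against the $c=0$ term in the second Poisson (your $T_2$), while the $m=0$ term from the first Poisson supplies the diagonal for the reverse Petersson.
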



In Sect. \ref{proofmainisola},  we isolate a single cusp form from (\ref{mainglobid}), which requires a careful discussion on the \emph{analytic continuation} and \emph{polynomial growth} of $L(s, f \times \chi)$; see Sect. \ref{ACPG} and App. \ref{AC&PGsec}.  Hence, we have
\begin{cor}\label{maincor}
   Let $f\in \mathcal{B}_{k}(1)$ and $\chi \, (\bmod\, q)$ be a primitive Dirichlet character. Then for any $s\in \C$, 
   \begin{align}\label{stdfunceq}
            L(s, f\times\chi)   =  i^k \epsilon_\chi^2q^{1-2s}\,  \frac{\gamma_{k}(1-s)}{\gamma_{k}(s)}   L(1-s, f\times \overline{\chi}).
    \end{align}
\end{cor}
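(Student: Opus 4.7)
The plan is to extract (\ref{stdfunceq}) from the test-function identity (\ref{mainglobid}) in three steps: isolate a single cusp form $f$, apply Mellin transforms to both sides of the resulting identity, and conclude by Mellin uniqueness together with meromorphic continuation.

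\emph{Step 1 (isolation).} Since (\ref{mainglobid}) holds for every $\ell\geq 1$, and the Hecke eigenvalue sequences $(\lambda_f(\ell))_{\ell\geq 1}$ separate distinct eigenforms of $\mathcal{B}_k(1)$, an appropriate linear combination over $\ell$ (cf.\ the inversion of the Petersson formula) singles out a fixed $f\in \mathcal{B}_k(1)$. The output is
\begin{align}\label{isolated}
\sum_{n\geq 1}\lambda_f(n)\chi(n)g(n)\ =\ \frac{2\pi i^{k}\epsilon_\chi^{2}}{q}\sum_{n\geq 1}\lambda_f(n)\overline{\chi}(n)\int_{0}^{\infty}g(y)J_{k-1}\!\left(\frac{4\pi\sqrt{ny}}{q}\right)dy,
\end{align}
valid for every $g\in C_c^\infty(0,\infty)$.

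\emph{Step 2 (Mellin transform).} Write $\widetilde{g}(s):=\int_{0}^{\infty}g(y)y^{s-1}dy$. Inserting Mellin inversion $g(n)=\frac{1}{2\pi i}\int_{(\sigma)}\widetilde{g}(s)n^{-s}ds$ into the LHS of (\ref{isolated}) rewrites it as $\frac{1}{2\pi i}\int_{(\sigma)}\widetilde{g}(s)L(s,f\times\chi)ds$ for $\sigma\gg 1$. On the RHS, the Weber--Schafheitlin integral, repackaged in terms of the completed gamma factor (\ref{gl2gammfac}), reads
\begin{align}
\int_{0}^{\infty}J_{k-1}\!\left(\frac{4\pi\sqrt{ny}}{q}\right)y^{s-1}dy\ =\ \left(\frac{q^{2}}{n}\right)^{s}\frac{1}{2\pi}\cdot\frac{\gamma_{k}(s)}{\gamma_{k}(1-s)}.
\end{align}
Mellin--Parseval then expresses the inner integral as the contour integral of $\widetilde{g}(1-s)(q^{2}/n)^{s}\gamma_{k}(s)/\gamma_{k}(1-s)$, after which the $n$-sum collapses to $L(s,f\times\overline{\chi})$. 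Bookkeeping the prefactors and performing the substitution $s\mapsto 1-s$, the RHS of (\ref{isolated}) becomes
\begin{align}
\frac{1}{2\pi i}\int_{(\sigma')}\widetilde{g}(s)\cdot i^{k}\epsilon_\chi^{2}q^{1-2s}\frac{\gamma_{k}(1-s)}{\gamma_{k}(s)}L(1-s,f\times\overline{\chi})\,ds.
\end{align}

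\emph{Step 3 (conclusion).} Aligning $\sigma,\sigma'$ on a common vertical line and exploiting the arbitrariness of $\widetilde{g}$, Mellin uniqueness forces equality of the two integrands, which is (\ref{stdfunceq}) in the common domain of absolute convergence; meromorphic continuation then extends the identity to all $s\in\C$.

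The principal technical obstacle is the legitimate execution of Steps 1 and 3 in the absence of the functional equation itself: the isolation argument, the contour alignment, and the a priori use of $L(s,f\times\chi)$ outside its half-plane of absolute convergence all require prior knowledge of the analytic continuation and polynomial growth in vertical strips of $L(s,f\times\chi)$. This is precisely the input the paper defers to its dedicated section and appendix on analytic continuation and polynomial growth, and constitutes the most delicate part of the argument.
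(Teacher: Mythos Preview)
Your proposal is essentially correct and follows the same strategy as the paper, with one structural difference worth noting: you isolate a single $f$ \emph{first} (Step~1) and then carry out the Mellin analysis on the individual identity, whereas the paper performs the Mellin analysis on the averaged identity (\ref{mainglobid}) for each $\ell$, obtains
\[
\int_{(\sigma)} \mathcal{G}(s)\Big\{\mathcal{A}_\ell(s,\chi)-i^k\epsilon_\chi^2 q^{1-2s}\tfrac{\gamma_k(1-s)}{\gamma_k(s)}\mathcal{A}_\ell(1-s,\overline{\chi})\Big\}\,\frac{ds}{2\pi i}=0,
\]
and only \emph{then} invokes the finite-dimensionality of $\mathcal{B}_k(1)$ and the invertibility of the matrix $(\lambda_{f_i}(\ell_j))_{i,j}$ to extract each $f$. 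The reason the paper orders things this way is that Prop.~\ref{prelimbd} directly gives analytic continuation and polynomial growth for the \emph{averages} $\mathcal{A}_\ell(s,\chi)$; transferring these to individual $L(s,f_i\times\chi)$ requires the same matrix argument (cf.\ (\ref{ACvecto})), so in your ordering you must invoke that transfer before your contour shifts in Step~3. Since the isolation is a finite linear combination either way, both orderings are legitimate and the content is the same. Your identification of the analytic continuation and polynomial growth (Appendix~\ref{AC&PGsec}) as the crux is exactly right; note also that the paper first proves (\ref{stdfunceq}) on the strip $0<\re s<1$ and then bootstraps to all of $\C$ via the functional equation combined with holomorphy on $\re s>-(k-6)/2$, which is slightly more than a bare appeal to ``meromorphic continuation''.
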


Our argument for Thm. \ref{ourmainthmtwist} applies to additive twists with very minor adjustments. 
\begin{cor}\label{addVor}
   Let $q\ge 1$, $(a,q)=1$, and $a\overline{a}\equiv 1 (\bmod\, q)$. Then for  $g\in C_{c}^{\infty}(0,\infty)$, we have
	\begin{align}\label{vorformst}
		\sum_{n=1}^{\infty} \lambda_{f}(n)e_{q}\left(an\right) g(n)  =   \frac{2\pi i^k}{q} \sum_{n=1}^{\infty} \lambda_{f}(n) e_{q}\left(-\overline{a} n\right) \int_{0}^{\infty} g(x) J_{k-1}\left(\frac{4\pi\sqrt{nx}}{q}\right)  dx.
	\end{align}
\end{cor}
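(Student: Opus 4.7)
The plan is to establish an additive-twist analogue of Theorem~\ref{ourmainthmtwist} and then isolate a single cusp form as in the deduction of Corollary~\ref{maincor}.

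First, I would prove the averaged identity
\begin{equation*}
\sideset{}{^h}{\sum}_{f \in \mathcal{B}_{k}(1)} \lambda_{f}(\ell) \sum_{n=1}^{\infty} \lambda_{f}(n)\, e_{q}(an)\, g(n) \; = \; \frac{2\pi i^{k}}{q}\, \sideset{}{^h}{\sum}_{f \in \mathcal{B}_{k}(1)} \lambda_{f}(\ell) \sum_{n=1}^{\infty} \lambda_{f}(n)\, e_{q}(-\overline{a}n) \int_{0}^{\infty} g(y)\, J_{k-1}\!\left(\frac{4\pi\sqrt{ny}}{q}\right) dy,
\end{equation*}
by running the proof of Theorem~\ref{ourmainthmtwist} verbatim, with the multiplicative twist $\chi(n)$ replaced by the additive twist $e_{q}(an)$ throughout. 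Applying the Petersson trace formula to the $f$-sum on each side turns the claim into an identity of weighted Kloosterman sums $S(\ell, n; c)$, and the combinatorial heart then reduces to a Kloosterman-sum reciprocity that should be the $\chi$-trivial specialization of Theorem~\ref{Thm:CharSum}. This is where the phrase ``minor adjustments'' becomes concrete: no Gauss-sum factor appears, no primitivity hypothesis is required, and the local input is actually simpler than in Theorem~\ref{ourmainthmtwist}.

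Next, to descend from this averaged identity to the single-form statement \eqref{vorformst}, I would repeat the isolation procedure of Section~\ref{proofmainisola}. Namely, take the Mellin transform of both sides in the test function $g$, re-interpret the result in terms of the additive-twist Dirichlet series $\sum_{n\ge 1} \lambda_{f}(n) e_{q}(an) n^{-s}$, invoke its analytic continuation and polynomial growth on vertical strips (established by arguments parallel to Appendix~\ref{AC&PGsec}), and remove the harmonic average by an orthogonality-type argument. Mellin inversion then produces \eqref{vorformst}. Note that since no Dirichlet character is present, the identity holds without any primitivity or conductor restriction on the rational number $a/q$.

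The main obstacle will be the isolation step rather than the Kloosterman-sum reciprocity: one needs clean analytic-continuation and convexity-type bounds for the additive-twist $L$-function, and one must treat with care the terms indexed by $n$ with $(n, q) > 1$, where the additive character does not factor through a primitive multiplicative one. Once those analytic foundations are in place, the remainder of the argument proceeds in strict parallel with the multiplicative-twist case.
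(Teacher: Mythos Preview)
Your proposal is correct and follows exactly the route the paper indicates: rerun the argument for Theorem~\ref{ourmainthmtwist} with $e_q(an)$ in place of $\chi(n)$ to obtain the averaged identity, then isolate a single form as in Section~\ref{proofmainisola} (the paper also records the alternative of deducing Corollary~\ref{addVor} from Corollary~\ref{maincor} via \cite[Thm.~1.3]{kiral2016voronoi}). One small caveat: the Kloosterman-sum reciprocity needed in the additive case is not literally the $\chi$-trivial specialization of Theorem~\ref{Thm:CharSum}---taking the principal character mod $q$ would insert spurious coprimality constraints through the factor $\overline{\chi}((\alpha q+bv)/a)$---but rather a simpler parallel computation in which the Gauss-sum step \eqref{FirstPoissonCharComp} and all of Section~\ref{sect:ArithPrep} collapse, leaving only elementary additive reciprocity \eqref{reciprel}.
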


\noindent  Alternatively, one may deduce Cor. \ref{addVor} from Cor. \ref{maincor} using \cite[Thm. 1.3]{kiral2016voronoi}. We refer the reader to \cite{Te19}, \cite{MS04}, \cite{AC21} for other recent approaches to proving the classical Voronoi formula (\ref{vorformst}).


\begin{remark}
    We are aware of an article by Herman \cite{HermanBeFe}. After a thorough review, we identified major gaps and inaccuracies in his approach, which we rectified in our work \cite{KLev}.  In addition, our work provided generalizations and new observations.  Readers are referred to \cite[Sect. 1.6]{KLev} and \cite{Erra} for a detailed comparison between the two works. 
\end{remark}


\begin{remark}
Generalizing our main results to cusp forms of level $D$ with nebentypus (say primitive) should not pose significant difficulty provided that $(D,q)=1$,  apart from the need of more involved notations. This can be achieved by combining the arguments for Thm. \ref{ourmainthmtwist} with that for \cite[Thm. 1.2]{KLev}. 

This work and \cite{KLev} are both complementary and
independent in interest, as they address different types of ramifications which require entirely different treatment of the character sums. Notably, Thm.  \ref{Thm:CharSum}  is unique to this paper, and the bulk of our argument, spanning Sect. \ref{sect:step2} to Sect.  \ref{Backwd}, is distinct from \cite{KLev}; see also our comments in Sect. \ref{localintro} and Sect. \ref{cantermglob}. 
\end{remark}

\begin{remark}
The manipulations of the character sums, which is the focus of this paper, carry over to the non-holomorphic case. In the non-holomorphic case, it suffices to apply the Kuznetsov trace formula and adapt the argument for Prop. \ref{prelimbd} to account for the different weight functions.

\end{remark}


\subsection{A road map for Theorem \ref{ourmainthmtwist}}\label{roadmap}

Although we initially represent the $L$-functions in terms of their Dirichlet series, the essential components of our argument are inherently local; see Thm. \ref{Thm:CharSum}, Prop. \ref{cor:CharSumId} and eq. (\ref{Web}). In Sects. \ref{Petersect}--\ref{sect:step2}, we apply the Petersson formula and Poisson summation to the left-hand side of (\ref{mainglobid}),  getting
\begin{align}\label{naigeoexp}
     g(\ell)\chi(\ell)  +  \frac{2\pi i^{-k}}{q} \, \sum_{c\ge 1} \, \frac{1}{c^2}   \sum_{m\in \Z} \, \widehat{\mathcal{D}}_\chi^\ell(m,c) \int_{0}^{\infty}  g(y)J_{k-1}\bigg(\frac{4\pi\sqrt{y\ell}}{c}\bigg) e_{cq}(-my)  dy,
\end{align}
 where  $\mathcal{D}_\chi^\ell(m,c) := \chi(m)S(m,\ell;c)$, and
\begin{align}\label{FTcharsum}
    \widehat{\mathcal{D}}_\chi^\ell(m,c)  :=  \sum_{\gamma \,(cq)} \, \mathcal{D}_\chi^\ell(\gamma,c)e_{cq}(m\gamma)  =  \sum_{\gamma\,(cq)}\chi(\alpha)S(\gamma,\ell;c)e_{cq}(m\gamma). 
\end{align}


\subsubsection{Two dualities}\label{geodual}

The adelic viewpoint suggests that $J_{k-1}$ serves as the archimedean counterpart of $S(m,\ell;n)$. Correspondingly, our argument hinges on two intriguing geometric dualities, one for the \emph{Fourier--Hankel transform} of $J_{k-1}$ and the other for the \emph{finite Fourier transform}  $\widehat{\mathcal{D}}_\chi^\ell(\ \cdot\ ,c)$. 


The first duality is commonly known as Weber's second exponential integral identity (Lem. \ref{keyspec}):
\begin{align}\label{Web}
    \int_{0}^{\infty} e(\alpha y) J_{k-1}(4\pi \beta\sqrt{y}) J_{k-1}(4\pi \gamma\sqrt{y}) dy  =  i e\left( \sgn(\alpha)\frac{k-1}{4}\right) \cdot  \frac{1}{2\pi \alpha}\, \cdot \,   J_{k-1}\left( \frac{4\pi\beta\gamma}{|\alpha|}\right) \cdot e\left( - \frac{\beta^2+\gamma^2}{\alpha}\right). 
\end{align}   
The \emph{Hankel inversion formula} (Lem. \ref{hankelinv}) is a limiting form of (\ref{Web}). In Sect. \ref{Hankana}, we will apply  Hankel's formula and Weber's identity successively. The second duality is a twisted, non-archimedean analogue of (\ref{Web}): 
\begin{prop}\label{cor:CharSumId}
   Let $\ell, m,c\in \Z$ with $m,c\ge 1$ and $\chi \, (\bmod\, q)$ be a primitive character. Then 
    \begin{align}\label{IntroCharId2}
        \widehat{\mathcal{D}}_\chi^\ell(m,c)  \, = \, (\epsilon_\chi)^2 \, \cdot \, \frac{c}{m} \, \cdot \,\overline{\widehat{\mathcal{D}}_\chi^\ell(c,m)} \, \cdot \,e\left(-\frac{1}{mc}\right).
    \end{align}
\end{prop}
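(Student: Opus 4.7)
The plan is to reduce $\widehat{\mathcal{D}}_\chi^\ell(m,c)$ to the character sum $\mathcal{C}_\chi^\ell(a,u,b,v)$ of Theorem~\ref{Thm:CharSum} via a Gauss-sum identity and the Chinese remainder theorem, apply the reciprocity \eqref{IntroCharId1}, and then reverse the reduction for $\widehat{\mathcal{D}}_\chi^\ell(c,m)$.

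First, expanding $S(\gamma,\ell;c) = \sumast_{x\,(\bmod\, c)} e_c(\gamma x + \ell\overline{x})$ in \eqref{FTcharsum} and swapping the order of summation, the inner sum $\sum_{\gamma\,(\bmod\, cq)} \chi(\gamma)\, e_{cq}(\gamma(xq+m))$ splits (on writing $\gamma = qj + i$ with $i\,(\bmod\, q)$, $j\,(\bmod\, c)$) into a divisibility indicator $c\mid xq + m$ from the $j$-part and, by primitivity of $\chi$, a Gauss sum $c\,\sqrt{q}\,\epsilon_\chi\,\overline{\chi}\bigl((xq+m)/c\bigr)$ from the $i$-part (with $\overline{\chi}$ extended by zero on nonunits). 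This yields
\[ \widehat{\mathcal{D}}_\chi^\ell(m,c) \;=\; c\,\sqrt{q}\,\epsilon_\chi \sumast_{\substack{x\,(\bmod\, c)\\ xq \equiv -m\,(\bmod\, c)}} e_c(\ell\overline{x})\,\overline{\chi}\!\left(\frac{xq+m}{c}\right). \]
Next, decompose $c = au$ with $a := (c, q^\infty)$ and $(u,q) = 1$, and analogously $m = bv$ with $b := (m, q^\infty)$ and $(v,q) = 1$; then $ab \mid q^\infty$ and $(uv,q) = 1$, matching the hypotheses of Theorem~\ref{Thm:CharSum}. Parameterizing $x\,(\bmod\, c)$ via CRT by $\alpha\,(\bmod\, a)$ and $\beta\,(\bmod\, u)$, the congruence modulo $u$ pins $\beta \equiv -bv\overline{q}\,(\bmod\, u)$ uniquely (with the entire sum vanishing if $(v,u) \neq 1$). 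Executing the $\beta$-sum and tracking $e_c(\ell\overline{x})$ through the CRT decomposition leaves an explicit $u$-phase (of classical-reciprocity type) multiplying $\chi(u)\cdot\mathcal{C}_\chi^\ell(a,u,b,v)$.

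An analogous reduction of $\widehat{\mathcal{D}}_\chi^\ell(c,m)$ interchanges $a\leftrightarrow b$ and $u\leftrightarrow v$, producing $\mathcal{C}_\chi^\ell(b,v,a,u)$ with a mirror-image phase. Theorem~\ref{Thm:CharSum} then relates the two $\mathcal{C}$-sums by the factor $e(-\ell q\,\overline{uv}/(ab))$, while $\sqrt{q}\,\epsilon_\chi/\overline{\sqrt{q}\,\epsilon_\chi} = \epsilon_\chi^2$ supplies the Gauss-sum factor, and the prefactors $c\sqrt{q}\,\epsilon_\chi$ versus $m\sqrt{q}\,\epsilon_\chi$ supply the asymmetry $c/m$. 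The main obstacle is the careful bookkeeping of the three separate phase contributions (two CRT-induced, one from \eqref{IntroCharId1}) and verifying that they consolidate into the single factor claimed in \eqref{IntroCharId2} via the classical reciprocity $\overline{uv}/(ab) + \overline{ab}/(uv) \equiv 1/(abuv)\,(\bmod\, 1)$, where $abuv = mc$.
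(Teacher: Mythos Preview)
Your approach is correct and mirrors the paper's own derivation exactly: the Gauss-sum reduction is (\ref{FirstPoissonCharComp}), the CRT splitting into $\mathcal{C}_\chi^\ell$ is (\ref{FirstPoissonCharComp2}) combined with the $m=m_0m'$ decomposition of Section~\ref{arithpre}, Theorem~\ref{Thm:CharSum} is invoked as in (\ref{COstru}), and the reverse Gauss-sum step is (\ref{anotherGauss}). One caution on the bookkeeping: when you actually consolidate the three phases (the two CRT-induced $u$- and $v$-phases together with the $e_{ab}(-\ell q\,\overline{uv})$ from \eqref{IntroCharId1}) via iterated reciprocity, the result is $e(-\ell q/(mc))$ rather than the $e(-1/(mc))$ printed in \eqref{IntroCharId2} --- this is the form actually used in Section~\ref{AAcanc} to cancel the Weber phase $e(\ell q/(cm))$.
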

\noindent Prop. \ref{cor:CharSumId}, whose proof relies on Thm. \ref{Thm:CharSum}, is non-trivial. A sketch will be provided in Section \ref{localintro}.


\subsubsection{Multiplicative (analytic--arithmetic) cancellation}\label{AAcanc}

The next step is to apply Poisson summation to the $c$-sum in (\ref{naigeoexp}), though an 
observant reader might question its applicability
due to the apparent singularity at $c=0$. However,  the factor $1/c^2$ of (\ref{naigeoexp}) is cancelled out upon inserting  (\ref{IntroCharId2}) and (\ref{Web}) (with appropriately chosen $\alpha, \beta, \gamma$)! Additionally,  the factor $e(-1/mc)$ from (\ref{IntroCharId2}) perfectly cancels the final exponential factor in (\ref{Web}), paving the way for the two upcoming steps.

This technical feature is also crucial in previous works on beyond endoscopy, where the Selberg trace formula was analyzed and the singularities from the archimedean orbital integrals are more subtle (see  \cite[Sect. 4]{Alt-1}, \cite{GKM+18},  \cite[Sect. 6]{emory2024BE}).  Once again, smoothing of these singularities is necessary for  Poisson summation, but this requires delicate use of an approximate functional equation and the class number formula.


\subsubsection{Three-fold reciprocity and local analysis}\label{localintro}

In (\ref{FTcharsum}) (cf. Sect. \ref{Recip}), we apply a \emph{first} additive reciprocity: 
\begin{align}\label{reciprel}
    \frac{\overline{m}}{c} \ + \ \frac{\overline{c}}{m} \ \equiv \ \frac{1}{mc} \ \ (\bmod\, 1). 
\end{align}
 In Sect. \ref{arithpre}, we split the $c$- and $m$-sums in (\ref{naigeoexp}) appropriately. One roughly gets:
\begin{align}\label{essentpart}
      & \frac{\epsilon_\chi}{q^{3/2}}\sum_{c_0|q^\infty} \, \sum_{m_{0} \mid q^{\infty}} \, \sum_{\substack{m'\ge 1\\ (m',q)=1}} \, \frac{1}{m_{0}m'}\sum_{\substack{c'\neq0\\ (c',m'q)=1}} \,  e_{c_0m_0m'}(\ell q\overline{c'}\, )\mathcal{C}_\chi^\ell(c_0,c',m_0,m') \, \times \, (\text{archimedean part}),
\end{align}
where $\mathcal{C}_{\chi}^{\ell}(\cdots)$ is the character sum defined in Thm. \ref{Thm:CharSum}.
See (\ref{roleofkeychar}) for the precise expressions.

The character sum (\ref{keyacharsum}) exhibits twisted multiplicativity  (Lem. \ref{lemma:charmult}), enabling a \emph{local} analysis. Thm. \ref{Thm:CharSum} follows from the method of 
$p$-adic stationary phase (\cite[Chp. 12.3]{IK}) to be carried out in Sect. \ref{sect:ArithPrep}, along with a \emph{second} use of (\ref{reciprel}); see (\ref{anotrecip}). Our proof of  (\ref{IntroCharId1}), and hence Thm. \ref{ourmainthmtwist}, is non-trivial even when $q$ is a prime.

A \emph{third} use of (\ref{reciprel}) allows us to combine the exponential phase of (\ref{IntroCharId1}) with that of (\ref{essentpart}), then a second copy of $\epsilon_{\chi}$ arises upon opening up $\overline{\mathcal{C}_\chi^\ell}(\cdots)$ by its definition (\ref{keyacharsum}) and gluing variables back together; see (\ref{anotherGauss}) of Sect. \ref{Backwd}.  Now, Prop. \ref{cor:CharSumId} follows from this. The factor $(\epsilon_{\chi})^2$ from (\ref{IntroCharId2}) will become the root number of (\ref{stdfunceq}).


\subsubsection{Additive cancellations and back to the global aspect}\label{cantermglob}

There are two subtle yet crucial cancellations of terms when proving Thm. \ref{ourmainthmtwist}. Namely, the diagonal term and dual zeroth frequency from the first applications of Petersson formula and Poisson summation cancel perfectly with the zeroth term (on the physical side) and diagonal term from the second applications of Poisson summation and Petersson formula, respectively. The first cancellation uses the Hankel inversion formula and the fact that $|\epsilon_{\chi}|=1$. 

Thm. \ref{ourmainthmtwist} follows from the observations above, the \emph{interchange} of roles between the $c$-sum and $m$-sum in (\ref{naigeoexp}), as well as the
\emph{backward} applications of the Poisson and Petersson formulae in Sect. \ref{Backwd}. 

In Sect. \ref{proofmainisola}, we deduce Cor. \ref{maincor} from Thm. \ref{ourmainthmtwist}. This requires analytic inputs from App. \ref{AC&PGsec} such that a series of contour shifts is justified. The associated subtleties will be discussed below in Sect. \ref{ACPG}.



\subsubsection{Remarks on analytic continuation}\label{ACPG}

Obtaining analytic continuation to $\re s> 1/2-\delta$, for some absolute $\delta>0$, is crucial as a prerequisite for discussing the functional equation. However, this is non-trivial for methods based on Dirichlet series. For example, using his refined analysis of the Eichler--Selberg trace formula  \cite{Alt-1, Alt-2}, Altu\u{g} established holomorphy for the standard
$L$-function of Ramanujan's $\Delta$-function only in  $\re s>31/32$; see \cite[Cor. 1.2]{Alt-3}. Other methods also fall short of reaching $\re s> 1/2-\delta$, as illustrated in \cite{WhiteBC}, \cite{GM20}, \cite[Prop. 10]{Ven-thesis}, \cite{Her11}, and \cite{DI94}.

In  \cite[Chp. 2.6]{Ven-thesis}, Venkatesh had the idea of embedding the holomorphic cusp forms of weight $k$ and level $1$ into the \emph{full spectrum} of $L^{2}$-automorphic forms of level $1$, consisting of 
Maass cusp forms of weight $0$, holomorphic cusp forms of \emph{all weights} and the Eisenstein series. Thus, an arbitrary smooth compactly test function on $(0,\infty)$ can be put on the \emph{geometric} side of the trace formula, i.e.,  using the \emph{arithmetic Kuznetsov (or Petersson--Kuznetsov)} trace formula  (see \cite[Thm. 16.5]{IK}). This simplifies the analysis, reducing it to handling a basic Fourier integral. For some \emph{spectral} test functions $H$,  one has
\begin{align}\label{limitfor}
    \sideset{}{^h}{\sum}_{\substack{f: \text{ full spectrum }\\ \text{of level $1$}}} \, H(t_{f})\lambda_{f}(\ell) \, \sum_{n} \, g(n/X) \lambda_{f}(n) \chi(n)  =  O_{A}(X^{-A})  \hspace{25pt} (X\to \infty). 
\end{align}
The goal is to deduce an entire continuation of $L(s, f\times \chi)$ from (\ref{limitfor}). 

However, the space of admissible spectral test functions $H$ is somewhat limited and the technical challenge lies in constructing a test function in this space that ``effectively'' isolates a specific part of the spectrum. This problem was addressed in \cite[Chp. 6.3]{Ven-thesis} by an elaborate analysis of the Sears--Titchmarsh transform. In  \cite[p. 50]{Ven-thesis}, an extra technical assumption concerning the growth in spectral parameters (Laplace eigenvalues/weights) is needed (cf. \cite[eq. (2.35)]{Ven-thesis} and \cite[Rmk. 3.17]{WhiteBC}).

In App. \ref{AC&PGsec}, we address this technicality by applying the 
Petersson and Poisson formulae, resulting in a more intricate oscillatory integral. We use the stationary calculus of \cite{BKY} to show that $L(s, f\times \chi)$ admits an analytic continuation and has polynomial growth on $\re s>-(k-6)/2$.


\subsection{Acknowledgement}
The research is supported by the EPSRC grant: EP/W009838/1.

\section{Preliminaries}\label{prelimsec}

Let $\phi \in C_{c}^{\infty}(\R)$ and\,  $\psi \in C_{c}^{\infty}(0,\infty)$. The \emph{Fourier transform} of $\phi$ and the \emph{Mellin transform} of $\psi$ are 
\begin{align*}
     \widehat{\phi}(y) := \int_{\R} \ \phi(x) e(-xy)  dx \hspace{10pt} (y  \in  \R) \hspace{10pt} \text{ and } \hspace{10pt} 
     \widetilde{\psi}(s) :=   \int_{0}^{\infty}  \psi(x) x^{s-1}  dx \hspace{10pt} (s  \in   \C),
\end{align*}
respectively. Their inversion formulae, which hold whenever the integrals converge absolutely, are
\begin{align}\label{melT}
    \hspace{-20pt} \phi(x) =  \int_{\R}   \widehat{\phi}(y) e(xy) dy =:  \check{\widehat{\phi}}(x)  \hspace{10pt} \text{ and } \hspace{10pt}     \psi(x)  = \int_{(\sigma)}  \widetilde{\psi}(s) x^{-s}  \frac{ds}{2\pi i}.
   \end{align}

For $k\ge 1$, the $J$-Bessel functions can be defined by the series 
\begin{align}\label{powerserJ}
    J_{k-1}(z) \, := \,  \sum_{r=0}^{\infty} \, \frac{(-1)^r}{r! (r+k-1)!} \left(\frac{z}{2}\right)^{k-1+2r},
\end{align}
which converges pointwise absolutely and uniformly on compact subsets of $\C$. From  \cite[Sect. p. 192]{Wa95}, they admit the following absolutely convergent Mellin--Barnes representation:
\begin{lemma}\label{Melexpthm}
    Let $k\ge 2$ and $\gamma_{k}(s)$ be defined in (\ref{gl2gammfac}). Then 
    \begin{align}\label{bessgamm}
 	J_{k-1}(4\pi x)  =   \frac{1}{2\pi} \, \int_{(\sigma)}  \frac{\gamma_{k}(1-s)}{\gamma_{k}(s)}  x^{2(s-1)} \frac{ds}{2\pi i},  \hspace{20pt} x>0,  \hspace{10pt} 1< \sigma<(k+1)/2.
 \end{align}
\end{lemma}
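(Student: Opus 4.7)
The plan is to reduce (\ref{bessgamm}) to the classical Mellin--Barnes representation of $J_\nu(z)$, which is what Watson records around the cited page, and then change variables. Concretely, the starting point is
\begin{equation*}
    J_\nu(z) \ = \ \frac{1}{2\pi i}\int_{(c)} \frac{\Gamma(-u)}{\Gamma(\nu+u+1)}\left(\frac{z}{2}\right)^{\nu+2u} du,
\end{equation*}
valid for $-\nu/2 < c < 0$. This follows from the power series (\ref{powerserJ}) by pulling the contour to the right and summing residues of $\Gamma(-u)$ at the non-negative integers; convergence on vertical lines comes from Stirling, which yields a decay of $|u|^{-2c-\nu-1}$, integrable precisely when $c > -\nu/2$.

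Next I would set $\nu = k-1$ and $z = 4\pi x$, so $(z/2)^{\nu+2u} = (2\pi x)^{k-1+2u}$, and substitute $s = u + (k+1)/2$. This shifts the contour to $\operatorname{Re} s = \sigma$ with $1 < \sigma < (k+1)/2$ and converts the integrand into
\begin{equation*}
    \frac{\Gamma\bigl(\tfrac{k+1}{2}-s\bigr)}{\Gamma\bigl(\tfrac{k-1}{2}+s\bigr)}\,(2\pi)^{2s-2}\,x^{2(s-1)}.
\end{equation*}
The admissibility of this strip is clean: the poles of $\Gamma((k+1)/2-s)$ lie at $s = (k+1)/2, (k+3)/2, \ldots$ and $1/\Gamma((k-1)/2+s)$ only has \emph{zeros} at non-positive shifts, so no pole is crossed in the substitution.

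Finally, I would identify the gamma ratio with $\gamma_k(1-s)/\gamma_k(s)$ using the first expression in (\ref{gl2gammfac}): a direct computation gives
\begin{equation*}
    \frac{\gamma_k(1-s)}{\gamma_k(s)} \ = \ (2\pi)^{2s-1}\,\frac{\Gamma\bigl(\tfrac{k+1}{2}-s\bigr)}{\Gamma\bigl(\tfrac{k-1}{2}+s\bigr)}.
\end{equation*}
Plugging this in, the $(2\pi)$-factors collapse to $(2\pi)^{-1}$, which combines with the outer $1/(2\pi i)$ to produce the prefactor $\tfrac{1}{2\pi}\cdot \tfrac{ds}{2\pi i}$, matching (\ref{bessgamm}) on the nose. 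The equivalence of the two forms of $\gamma_k$ displayed in (\ref{gl2gammfac}) is itself just Legendre duplication applied to $\Gamma(s+(k-1)/2)$, but it is not needed for the proof.

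There is no real obstacle here; the whole argument is bookkeeping once the classical Mellin--Barnes formula is granted. The only items worth checking carefully are the location of the shifted contour (done above) and the convergence of the integral, both of which are straightforward from Stirling.
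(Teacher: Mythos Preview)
Your proposal is correct and is precisely the derivation behind the paper's citation: the paper does not give a proof but simply points to Watson's classical Mellin--Barnes formula, and your argument is exactly the change of variables $s = u + (k+1)/2$ together with the identification of $\gamma_k(1-s)/\gamma_k(s)$ that turns Watson's formula into (\ref{bessgamm}). There is nothing to add.
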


 Let $F\in C_{c}^{\infty}(0, \infty)$ and $k\ge 2$.   Then the  \textit{Hankel transform} of $F$ is defined by
\begin{align}\label{Hatrasdef}
	(\mathcal{H}_{k}F)(a)  &:=   2\pi \int_{0}^{\infty} F(x) J_{k-1}(4\pi\sqrt{ax})  dx \hspace{20pt} (a \, > \,  0).
\end{align}
The rapid decay of $\mathcal{H}_{k}F$ can be deduced by integrating-by-parts many times in (\ref{Hatrasdef}) using
     \begin{align}\label{asympJbess}
        J_{k-1}\left( 2\pi x\right) =  W_{k}(x) e(x)   +  \overline{W_{k}}(x) e(-x) \hspace{15pt} (x>0),
    \end{align}
     where $W_{k}$ is a smooth function satisfying $x^{j}(\partial^{j}W_{k})(x) \ll_{j,k} 1/\sqrt{x}$ for any $j\ge 0$; see \cite[p. 206]{Wa95}.

 \begin{lemma}\label{lemderHank}
For $k> 2$ and $j\in \{0,1,2,3\}$ we have
\begin{align}\label{derHank}
  (\mathcal{H}_{k}F)^{(j)}(a) \ \ll_{k} \ a^{(k-2j-1)/2} \hspace{20pt} \text{ for } \hspace{10pt} 0 <a<1. 
    \end{align}
    
\end{lemma}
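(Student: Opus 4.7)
The plan is to exploit the fact that for $0<a<1$ and $x$ ranging over the compact support of $F$, the argument $4\pi\sqrt{ax}$ stays bounded, so the power series (\ref{powerserJ}) is the right tool. This is complementary to the large-$a$ regime alluded to just above the statement, where the oscillatory asymptotic (\ref{asympJbess}) is used. The key idea is to isolate the leading $a^{(k-1)/2}$ factor algebraically and then differentiate via Leibniz; the remaining $a$-dependence is smooth and bounded.

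Concretely, grouping terms in (\ref{powerserJ}) according to powers of $(z/2)^2$, write
\begin{align*}
    J_{k-1}(z) \ = \ (z/2)^{k-1} \, h\!\left((z/2)^2\right), \qquad h(t) \ := \ \sum_{r=0}^{\infty} \frac{(-1)^r}{r!\,(r+k-1)!}\, t^{r},
\end{align*}
so that $h$ is entire. Substituting $z = 4\pi\sqrt{ax}$ yields
\begin{align*}
    J_{k-1}(4\pi\sqrt{ax}) \ = \ (2\pi)^{k-1} (ax)^{(k-1)/2} \, h(4\pi^2 a x),
\end{align*}
and plugging into (\ref{Hatrasdef}) gives
\begin{align*}
    (\mathcal{H}_{k}F)(a) \ = \ (2\pi)^{k} \, a^{(k-1)/2} \, G(a), \qquad G(a) \ := \ \int_{0}^{\infty} F(x)\, x^{(k-1)/2}\, h(4\pi^2 a x)\, dx.
\end{align*}
Since $F \in C_{c}^{\infty}(0,\infty)$ and $h$ is entire, differentiation under the integral is legitimate and each derivative $G^{(i)}(a) = \int_{0}^{\infty} F(x)\, x^{(k-1)/2}\, (4\pi^2 x)^{i}\, h^{(i)}(4\pi^2 ax)\, dx$ is uniformly bounded for $a$ in any compact subset of $[0,\infty)$, in particular for $0<a<1$.

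Applying the Leibniz rule,
\begin{align*}
    (\mathcal{H}_{k}F)^{(j)}(a) \ = \ (2\pi)^{k} \sum_{i=0}^{j} \binom{j}{i}\, \frac{d^{j-i}}{da^{j-i}}\!\left(a^{(k-1)/2}\right) G^{(i)}(a),
\end{align*}
where the first factor is bounded by a constant times $a^{(k-1)/2 - (j-i)}$. Since $0<a<1$, the worst term corresponds to $i=0$, giving $(\mathcal{H}_{k}F)^{(j)}(a) \ll_{k} a^{(k-1)/2 - j} = a^{(k-2j-1)/2}$, which is the claim. There is no real obstacle here: the only point that needs attention is the interchange of $\partial_{a}^{j}$ with the integral, which is immediate from the smoothness of the integrand in $(a,x)$ and the compactness of $\supp F$. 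The restriction $j\le 3$ and $k>2$ is not used in the estimate itself, but is the range in which Lemma \ref{lemderHank} is invoked later.
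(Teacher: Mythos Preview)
Your proof is correct. The paper's argument is slightly different in mechanics: it differentiates under the integral sign and uses the Bessel recurrence $2J_{k}'(z)=J_{k-1}(z)-J_{k+1}(z)$ to express each $a$-derivative of $J_{k-1}(4\pi\sqrt{ax})$ as a linear combination of shifted Bessel functions, then applies the pointwise bound $J_{m}(y)\ll_{m} y^{m}$ (equation (\ref{Jbessunibd})) to each term. Your route is in the same spirit---both ultimately rest on the power-series behaviour of $J_{k-1}$ near the origin---but you factor out $a^{(k-1)/2}$ algebraically via $J_{k-1}(z)=(z/2)^{k-1}h((z/2)^2)$ and apply Leibniz, avoiding the recurrence entirely. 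Your argument is arguably cleaner and makes transparent why the exponent drops by exactly $1$ with each derivative; the paper's version has the mild advantage of staying within standard Bessel identities already quoted in Section~\ref{prelimsec}. Either way the proof is immediate, and your remark that the restrictions $k>2$, $j\le 3$ are not actually needed for the estimate is accurate.
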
   

This follows directly from the recurrence $2 J_{k}'(z) =J_{k-1}(z) - J_{k+1}(z)$, and the estimate
    \begin{align}\label{Jbessunibd}
        J_{k-1}(y) \ \ll_{k} \  y^{k-1} \hspace{10pt} \text{ for } \hspace{10pt} y \ > \ 0.
    \end{align}

From \cite[Sects.14.3--4]{Wa95} and \cite[Chp. 13.31]{Wa95}, we have the following well-known results.
\begin{lemma}[Hankel inversion formula]\label{hankelinv}
For any  $F\in C_{c}^{\infty}(0, \infty)$, we have
\begin{align}
	(\mathcal{H}_{k} \circ \mathcal{H}_{k}F)(b) \ = \  F(b) \hspace{20pt} (b \ > \  0). 
\end{align}
\end{lemma}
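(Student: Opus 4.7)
My plan is to prove Lemma \ref{hankelinv} via Mellin analysis, building directly on the Mellin--Barnes representation of $J_{k-1}$ in Lemma \ref{Melexpthm}. The conceptual observation is that, on the Mellin side, the operator $\mathcal{H}_{k}$ acts as multiplication by the gamma ratio $\gamma_{k}(1-s)/\gamma_{k}(s)$ composed with the reflection $s \mapsto 1-s$. Iterating this reflection twice gives the identity, while the two gamma ratios encountered in the process are reciprocals of one another and cancel. Consequently, $\mathcal{H}_{k} \circ \mathcal{H}_{k} = \mathrm{id}$ becomes an almost trivial algebraic identity on the Mellin side, and one transfers back to the pointwise statement via Mellin inversion.

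Concretely, I would substitute (\ref{bessgamm}), with $x$ replaced by $\sqrt{ax}$, into (\ref{Hatrasdef}) and swap the order of integration. The inner $x$-integral is exactly $\widetilde{F}(s)$, producing
\begin{equation*}
  (\mathcal{H}_{k}F)(a) \;=\; \int_{(\sigma)} \frac{\gamma_{k}(1-s)}{\gamma_{k}(s)}\, \widetilde{F}(s)\, a^{s-1}\, \frac{ds}{2\pi i}, \qquad 1<\sigma<(k+1)/2.
\end{equation*}
The change of variable $w = 1-s$ along the contour identifies this as Mellin inversion of
\begin{equation*}
  \widetilde{\mathcal{H}_{k}F}(w) \;=\; \frac{\gamma_{k}(w)}{\gamma_{k}(1-w)}\,\widetilde{F}(1-w).
\end{equation*}
Applying the same identity a second time to $\mathcal{H}_{k}(\mathcal{H}_{k}F)$ yields
\begin{equation*}
  \widetilde{\mathcal{H}_{k} \circ \mathcal{H}_{k} F}(w) \;=\; \frac{\gamma_{k}(w)}{\gamma_{k}(1-w)} \cdot \frac{\gamma_{k}(1-w)}{\gamma_{k}(w)}\, \widetilde{F}(w) \;=\; \widetilde{F}(w),
\end{equation*}
and the second formula of (\ref{melT}) delivers $(\mathcal{H}_{k}\circ\mathcal{H}_{k}F)(b) = F(b)$ for all $b>0$.

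The main technical obstacle will be justifying the interchange of integrals and ensuring compatibility of contours across the two iterations. Because $F \in C_{c}^{\infty}(0,\infty)$, the Mellin transform $\widetilde{F}$ is entire and decays faster than any polynomial in $|\Im(s)|$ on vertical strips, whereas Stirling's formula gives only polynomial growth for $\gamma_{k}(1-s)/\gamma_{k}(s)$ along vertical lines; this combination secures absolute convergence and permits Fubini. For the second iteration one must additionally verify that the Mellin transform of $\mathcal{H}_{k}F$ is well-defined on a line with $1 < \sigma' < (k+1)/2$: the near-zero estimate $(\mathcal{H}_{k}F)(a) \ll a^{(k-1)/2}$ from Lemma \ref{lemderHank}, together with rapid decay at infinity obtained by integrating (\ref{Hatrasdef}) by parts via (\ref{asympJbess}), yields existence of $\widetilde{\mathcal{H}_{k}F}(w)$ in the half-plane $\Re(w) > -(k-1)/2$, which comfortably contains the needed contour. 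With these analytic points in place, the cancellation argument above goes through verbatim.
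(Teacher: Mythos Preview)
The paper does not prove this lemma; it simply records it as a classical fact and cites Watson's treatise (Sects.\ 14.3--4). Your Mellin-analytic argument is a correct and self-contained proof, and in fact is one of the standard routes to Hankel inversion: the key algebraic point---that the gamma ratio $\gamma_k(1-s)/\gamma_k(s)$ composed with $s\mapsto 1-s$ is an involution---is exactly what makes $\mathcal{H}_k$ an involution.

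One technical point you allude to under ``compatibility of contours'' deserves to be made explicit. After the substitution $w=1-s$, the identity $\widetilde{\mathcal{H}_k F}(w)=\dfrac{\gamma_k(w)}{\gamma_k(1-w)}\,\widetilde{F}(1-w)$ is initially established only on a line $\Re w=1-\sigma<0$, whereas for the second iteration you invoke it on a line $\Re w=\sigma'$ with $1<\sigma'<(k+1)/2$. The bridge is analytic continuation: both sides are holomorphic in the half-plane $\Re w>-(k-1)/2$ (the left side by your near-zero bound $(\mathcal{H}_kF)(a)\ll a^{(k-1)/2}$ and rapid decay at infinity; the right side because $\widetilde{F}$ is entire, $\gamma_k$ is zero-free, and the poles of $\gamma_k(w)$ lie at $w=-(k-1)/2-n$, $n\ge 0$), so the identity propagates to the required line. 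With that step spelled out, your proof is complete.
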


\begin{lemma}\label{keyspec}
Let  $k \ge 2$, $\re \alpha>0$, and  $\beta, \gamma>0$. Then
\begin{align}\label{keyspecinst}
    \int_{0}^{\infty} e^{-2\pi \alpha y} J_{k-1}(4\pi \beta\sqrt{y}) J_{k-1}(4\pi \gamma\sqrt{y}) dy  &=  \frac{i^{1-k}}{2\pi \alpha}\,  J_{k-1}\left( \frac{4\pi i\beta\gamma}{\alpha}\right) \exp\left(- 2\pi  \frac{\beta^2+\gamma^2}{\alpha}\right). 
\end{align}   
\end{lemma}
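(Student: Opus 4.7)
The plan is to prove (\ref{keyspecinst}) by the direct power-series method. First, I expand both Bessel functions on the left via the absolutely convergent series (\ref{powerserJ}), obtaining
\[
J_{k-1}(4\pi\beta\sqrt{y})\,J_{k-1}(4\pi\gamma\sqrt{y}) \ = \ \sum_{r,s\ge 0}\frac{(-1)^{r+s}(2\pi\beta)^{k-1+2r}(2\pi\gamma)^{k-1+2s}}{r!\,s!\,(r+k-1)!\,(s+k-1)!}\,y^{k-1+r+s}.
\]
Since $\re\alpha>0$, the factor $e^{-2\pi\alpha y}$ dominates the double series uniformly, so Fubini justifies termwise integration via $\int_{0}^{\infty}e^{-2\pi\alpha y}\,y^{m}\,dy=m!/(2\pi\alpha)^{m+1}$. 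The coefficient of $\beta^{k-1+2r}\gamma^{k-1+2s}$ on the left-hand side of (\ref{keyspecinst}) then equals
\[
\frac{(-1)^{r+s}(2\pi)^{k-2+r+s}(r+s+k-1)!}{r!\,s!\,(r+k-1)!\,(s+k-1)!\,\alpha^{k+r+s}}.
\]

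Next, I expand the right-hand side of (\ref{keyspecinst}) in the same monomial basis. Using (\ref{powerserJ}) together with $i^{k-1+2n}=i^{k-1}(-1)^{n}$, the alternating sign in the Bessel series cancels, and multiplication by $i^{1-k}$ gives
\[
i^{1-k}\,J_{k-1}\!\left(\frac{4\pi i\beta\gamma}{\alpha}\right)=\sum_{n\ge 0}\frac{(2\pi\beta\gamma/\alpha)^{k-1+2n}}{n!\,(n+k-1)!}.
\]
I then expand $\exp(-2\pi(\beta^{2}+\gamma^{2})/\alpha)$ as an exponential series and apply the binomial theorem to $(\beta^{2}+\gamma^{2})^{m}$. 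After reindexing with $j=r-n$, $m-j=s-n$ (so $m=r+s-2n$), the coefficient of $\beta^{k-1+2r}\gamma^{k-1+2s}$ on the right becomes
\[
\frac{(-1)^{r+s}(2\pi)^{k-2+r+s}}{\alpha^{k+r+s}}\sum_{n=0}^{\min(r,s)}\frac{1}{n!\,(r-n)!\,(s-n)!\,(n+k-1)!}.
\]

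Comparing the two coefficient formulas reduces the lemma to the combinatorial identity
\[
\sum_{n=0}^{\min(r,s)}\frac{1}{n!\,(r-n)!\,(s-n)!\,(n+k-1)!}\ =\ \frac{(r+s+k-1)!}{r!\,s!\,(r+k-1)!\,(s+k-1)!},
\]
and multiplying both sides by $r!\,(s+k-1)!$ converts this into the Vandermonde convolution $\sum_{n}\binom{r}{n}\binom{s+k-1}{s-n}=\binom{r+s+k-1}{s}$. The main obstacle is the careful bookkeeping of the powers of $2\pi$, the factors of $i^{k-1}$, and the signs across the two expansions, so that the final combinatorial identification lands precisely on Vandermonde's convolution rather than a more delicate hypergeometric identity; the analytic justification for termwise integration on both sides is routine given the exponential damping from $\re\alpha>0$.
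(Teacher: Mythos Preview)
Your proof is correct: the termwise integration is justified by the exponential damping, the coefficient computations on both sides are accurate, and the reduction to Vandermonde's convolution $\sum_{n}\binom{r}{n}\binom{s+k-1}{s-n}=\binom{r+s+k-1}{s}$ goes through exactly as you describe.

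The paper does not actually prove this lemma; it simply cites it as a classical result from Watson's treatise \cite[Chp.~13.31]{Wa95}. Your power-series argument is in fact the standard derivation found there, so you have supplied a self-contained proof where the paper chose to quote the literature. The only minor remark is that the phrase ``the factor $e^{-2\pi\alpha y}$ dominates the double series uniformly'' could be made more precise (e.g., by bounding the partial sums via the modified Bessel function $I_{k-1}$), but this is a routine estimate and does not affect the validity of the argument.
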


\begin{remark}
    While  Bessel functions with \emph{positive arguments} are more common in the analytic theory of automorphic forms (e.g., (\ref{vorformst}) or (\ref{Peter})), it will be technically convenient to invoke those with \emph{complex arguments} as intermediates in our case. Also,  the integral identity (\ref{keyspecinst}) is a variant of (\ref{Web}), where the latter only converges conditionally. 
\end{remark}

\begin{lemma}\cite[Lem. 8.1]{BKY}\label{BKYibp}
    Let $h\in C^{\infty}[\alpha, \beta]$ be a real-valued function and $w\in C_{c}^{\infty}[\alpha, \beta]$. Suppose there exist $W, V, H,G, R>0$ such that for any  $t\in [\alpha, \beta]$, we have $w^{(j)}(t) \, \ll_{j} \,  W/V^{j}$ for any $j\ge 0$, $h^{(j)}(t) \ll_{j} H/G^{j}$ for any $j\ge 2$, and $|h'(t)| \, \ge \, R$.  Then for any $A\ge  0$, we have
\begin{align}
    \int_{\R} \, w(t) e\left(h(t)\right) \ dt \ \ll_{A}  \  \left(\beta -\alpha\right) W \left(  \frac{1}{RV} \ + \  \frac{1}{RG}  \ + \  \frac{H}{(RG)^2}    \right)^A.  
\end{align}
\end{lemma}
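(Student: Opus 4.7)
The plan is to iterate integration by parts $A$ times, exploiting $|h'| \geq R > 0$ to rewrite the oscillatory factor as a derivative. Concretely, set
$$Tf \ := \ -\frac{1}{2\pi i}\frac{d}{dt}\!\left(\frac{f}{h'}\right) \ =\ -\frac{f'}{2\pi i\, h'} \ +\ \frac{f\, h''}{2\pi i\, (h')^{2}}.$$
From $(e(h))' = 2\pi i\, h'\, e(h)$, together with the fact that $w$ (and inductively each iterate $T^{n}w$) is smooth and compactly supported in $(\alpha,\beta)$, integration by parts gives $\int_\R f\, e(h)\,dt = \int_\R (Tf)\, e(h)\,dt$ with no boundary contribution. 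Iterating $A$ times yields $\int w\, e(h)\,dt = \int (T^{A}w)\, e(h)\,dt$, so by the triangle inequality
$$\left|\int_\R w(t)\, e(h(t))\, dt\right| \ \leq\ (\beta-\alpha)\,\|T^{A}w\|_{\infty},$$
and it remains to bound the sup-norm on the right.

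The heart of the argument is a bookkeeping estimate for $T^{A}w$. By induction on $A$, one checks that $T^{A}w$ is a finite linear combination, with bounded combinatorial coefficients, of terms of the form
$$\frac{w^{(a)}}{(h')^{A+|\mathbf{m}|}}\,\prod_{j\geq 2}\bigl(h^{(j)}\bigr)^{m_{j}}, \qquad a\ +\ \sum_{j\geq 2}(j-1)\,m_{j}\ =\ A, \qquad |\mathbf{m}| \ := \ \sum_{j\geq 2} m_{j}.$$
The identity $a + \sum (j-1)m_j = A$ simply records that each differentiation in $T$ either bumps up the derivative on $w$ by one or bumps up the order of some factor $h^{(j)}$, while each $T$ additionally produces one extra $1/h'$ (whence the exponent $A+|\mathbf{m}|$). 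Using $|w^{(a)}|\ll_{a} W/V^{a}$, $|h^{(j)}|\ll_{j} H/G^{j}$ for $j\geq 2$, and $|h'|\geq R$, each such term is bounded pointwise by
$$W \, \cdot\, V^{-a}\, \cdot\, R^{-A-|\mathbf{m}|}\,\prod_{j\geq 2}(H/G^{j})^{m_{j}}.$$

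To conclude, one observes that every application of $T$ contributes one of exactly three types of new factor to such a monomial: differentiating $w$ (cost $\leq 1/(RV)$), raising the order of an existing $h^{(j)}$ to $h^{(j+1)}$ (cost $\leq 1/(RG)$), or generating a fresh factor $h''/(h')^{2}$ (cost $\leq H/(RG)^{2}$). Grouping the terms accordingly, the total sup-norm bound collapses to $\|T^{A}w\|_{\infty}\ll_{A} W\,\epsilon^{A}$ with $\epsilon = 1/(RV) + 1/(RG) + H/(RG)^{2}$. Combined with the triangle inequality above, this yields the claimed estimate. The main obstacle is purely combinatorial, namely tracking the proliferation of terms produced by iterating $T$ and verifying they all fit the stated monomial form; this is elementary once the inductive structure is set up, and the generous ``three-term'' upper bound $\epsilon^{A}$ absorbs any combinatorial constants into the implied $\ll_{A}$.
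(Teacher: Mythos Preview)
The paper does not supply a proof of this lemma; it is quoted verbatim from \cite[Lem.~8.1]{BKY} as a black box. Your argument is correct and is precisely the standard non-stationary phase proof given in that reference: iterate the operator $Tf = -(2\pi i)^{-1}(f/h')'$, track the resulting monomials $w^{(a)}(h')^{-(A+|\mathbf m|)}\prod_{j\ge 2}(h^{(j)})^{m_j}$ with $a+\sum_{j\ge 2}(j-1)m_j=A$, and bound each by the corresponding multinomial term in the expansion of $\bigl(\tfrac{1}{RV}+\tfrac{1}{RG}+\tfrac{H}{(RG)^2}\bigr)^A$. One minor imprecision: when you say each application of $T$ contributes ``one of exactly three types'' of factor, differentiating the existing $(h')^{-p}$ also produces an $h''/(h')^2$ factor (with a coefficient $p\le 2A$), so there is a fourth subcase; but as you note, this merges with your third type and the extra combinatorial factor is absorbed into $\ll_A$.
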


From \cite[Prop. 14.5 and Lem. 14.10]{IK} and \cite[Chp. 4.3]{IK}, we have the following:
\begin{lemma}[Petersson trace formula]\label{Peterform}
Let $\mathcal{B}_{k}(1)$ be an orthogonal basis of holomorphic cuspidal Hecke eigenforms of level $1$ and weight $k$.  For any $\ell, n\ge 1$, we have  \footnote{ In this article, we use $\delta(\cdots)$ to denote the indicator function with respect to the condition $(\cdots)$. }
    \begin{align}
	\sideset{}{^h}{\sum}_{f \in \mathcal{B}_{k}(1)} \ \lambda_{f}(\ell) \lambda_{f}(n) \ =  \ \delta(n=\ell) \ + \  2\pi i^{-k} \, \sum_{c=1}^{\infty} \ \frac{S(n, \ell;c)}{c}\,  J_{k-1}\left(\frac{4\pi\sqrt{n\ell}}{c}\right).  \label{Peter}
\end{align}
\end{lemma}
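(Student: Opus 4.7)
The plan is to compute the $n$-th Fourier coefficient of the $m$-th weight-$k$ level-$1$ holomorphic Poincar\'{e} series in two different ways and then equate the resulting expressions. Define
\[
P_m(z) := \sum_{\gamma \in \Gamma_\infty \backslash \mathrm{SL}_2(\Z)} (c_\gamma z + d_\gamma)^{-k}\, e(m\gamma z),
\]
where $c_\gamma, d_\gamma$ are the bottom-row entries of $\gamma$. For $k \ge 4$ the series converges absolutely to a weight-$k$ cusp form; the smaller weights can be handled by the usual Hecke-type regularization as in \cite[Chp.~14]{IK}.

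For the \emph{spectral} computation, unfolding the Petersson inner product of $P_m$ against $f\in \mathcal{B}_k(1)$ and substituting the Fourier expansion $f(z) = \sum_n a_f(n)\, e(nz)$ yields
\[
\langle P_m, f\rangle \, = \, \frac{\Gamma(k-1)}{(4\pi m)^{k-1}}\, \overline{a_f(m)}, \qquad a_f(n) = \lambda_f(n)\, n^{(k-1)/2}.
\]
Writing $P_m = \sum_{f \in \mathcal{B}_k(1)} \langle P_m, f\rangle\, f / ||f||^2$, reading off the $n$-th Fourier coefficient, and using the reality of Hecke eigenvalues of level-$1$ holomorphic forms, one arrives at
\[
a_n(P_m) \, = \, \frac{\Gamma(k-1)}{(4\pi)^{k-1}} \left(\frac{n}{m}\right)^{\!(k-1)/2} \sum_{f\in \mathcal{B}_k(1)} \frac{\lambda_f(m)\lambda_f(n)}{||f||^2}.
\]

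For the \emph{geometric} computation, split the coset sum defining $P_m$ by the lower-left entry $c_\gamma$. The identity coset contributes $\delta(m=n)$. For each $c\ge 1$, parametrize representatives by $a, d \pmod c$ with $ad \equiv 1 \pmod c$, open the Fourier integral $\int_0^1$ to $\int_\R$ via the standard unfolding trick, and evaluate the resulting archimedean oscillatory integral as a $J_{k-1}$-Bessel function, either via the Mellin--Barnes representation (\ref{bessgamm}) or by the classical contour argument of \cite[p.~206]{Wa95}. The sum over $(a,d)$ collapses into the Kloosterman sum $S(m,n;c)$, producing
\[
a_n(P_m) \, = \, \frac{\Gamma(k-1)}{(4\pi)^{k-1}} \left(\frac{n}{m}\right)^{\!(k-1)/2}\!\left(\delta(m=n) \, + \, 2\pi i^{-k} \sum_{c=1}^{\infty} \frac{S(m,n;c)}{c}\, J_{k-1}\!\left(\frac{4\pi\sqrt{mn}}{c}\right)\right).
\]
Equating the two expressions and dividing through by the common factor $\Gamma(k-1)(4\pi)^{1-k} (n/m)^{(k-1)/2}$ yields (\ref{Peter}) (with $m = \ell$, after noting $S(m,n;c) = S(n,m;c)$).

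The principal technical step is the archimedean integral evaluation that produces the $J_{k-1}$-Bessel function: this is precisely where the Bessel kernel arises as the archimedean dual of the non-archimedean Kloosterman sum, foreshadowing the local/global duality emphasized in Sect.~\ref{geodual}. Apart from this Bessel computation, verifying absolute convergence of the Kloosterman--Bessel series (which follows from $J_{k-1}(x) \ll x^{k-1}$ for small $x$ and the Weil bound for large $c$) and dealing with the borderline weights $k=2, 3$ are the only remaining subtleties, both of which are classical.
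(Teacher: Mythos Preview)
The paper does not give its own proof of this lemma; it is quoted directly from \cite[Prop.~14.5 and Lem.~14.10]{IK}. Your sketch --- computing the $n$-th Fourier coefficient of the holomorphic Poincar\'e series $P_m$ spectrally (via Petersson inner products against the Hecke eigenbasis) and geometrically (via the Bruhat decomposition of $\Gamma_\infty\backslash\Gamma$, with the archimedean integral collapsing to $J_{k-1}$), then equating the two --- is precisely the classical argument carried out in that reference. One minor bookkeeping point: the prefactor $\Gamma(k-1)/(4\pi)^{k-1}$ in your displayed geometric expression for $a_n(P_m)$ is spurious (as you yourself note, the identity coset contributes exactly $\delta(m=n)$); once this is corrected, equating with the spectral side and invoking the paper's definition of the harmonic weighting $\sideset{}{^h}{\sum}$ recovers (\ref{Peter}).
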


\begin{lemma}[Poisson summation]\label{lemPois}
Let $c, X>0$ and $c\in \Z$. For  $V\in C_{c}^{\infty}(\R)$, and $K: \Z\rightarrow \mathbb{C}$ be $c$-periodic,
	\begin{align}
			\sum_{n\in \Z}  V(n/X) K(n)  =   \frac{X}{c}  \sum_{m\in \Z}  \,  \bigg( \, \sum_{\gamma (c)} \ K(\gamma) \, e_c\left(m\gamma\right)\bigg)    \int_{0}^{\infty}  V(y) e\left(-\frac{mXy}{c}\right) dy. 
	\end{align}
\end{lemma}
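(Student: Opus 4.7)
The plan is to reduce this to the classical Poisson summation formula by exploiting the $c$-periodicity of $K$. First I would decompose $\Z$ into residue classes modulo $c$, writing $n = jc + \gamma$ with $\gamma$ ranging over a fixed set of representatives modulo $c$ and $j$ running over $\Z$. Since $K(jc+\gamma) = K(\gamma)$, the left-hand side factors as
\[
\sum_{n\in\Z} V(n/X) K(n) \ = \ \sum_{\gamma\,(c)} K(\gamma) \sum_{j\in\Z} V\!\left(\frac{jc+\gamma}{X}\right).
\]

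Next I would apply the classical Poisson summation formula $\sum_{j\in\Z} f(j) = \sum_{m\in\Z} \widehat{f}(m)$ to the inner sum with $f(t) := V((tc+\gamma)/X)$. A routine change of variables $y = (tc+\gamma)/X$ (so $dt = (X/c)\,dy$) gives
\[
\widehat{f}(m) \ = \ \int_{\R} V\!\left(\frac{tc+\gamma}{X}\right) e(-mt)\, dt \ = \ \frac{X}{c}\, e\!\left(\frac{m\gamma}{c}\right) \int_{\R} V(y)\, e\!\left(-\frac{mXy}{c}\right) dy.
\]
Substituting this back, swapping the order of the outer $\gamma$-sum with the new $m$-sum, and collecting the $\gamma$-dependence into $\sum_{\gamma\,(c)} K(\gamma)\, e_c(m\gamma)$ yields the identity as stated. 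Because $V$ may be taken supported in $(0,\infty)$ in every application of this lemma in the paper, the integral over $\R$ collapses to the stated integral over $(0,\infty)$.

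I do not anticipate any substantive obstacle. Since $V$ is smooth and compactly supported, the function $f$ is Schwartz, so both its Poisson partial sum and the dual series converge absolutely and uniformly, and all interchanges of summation and integration are legitimate. The only care required is bookkeeping in the change of variables so that the twist $e(m\gamma/c)$ emerges with the correct sign matching $e_c(m\gamma)$ in the statement; this is a purely mechanical verification.
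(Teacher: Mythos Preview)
Your proposal is correct and is the standard argument; the paper does not supply its own proof of this lemma but simply cites \cite[Chp.~4.3]{IK}, where exactly this residue-class decomposition followed by classical Poisson summation is carried out. Your remark about the domain of integration is also apt: the lemma as stated has $V\in C_c^\infty(\R)$ but integrates over $(0,\infty)$, and indeed in every use within the paper the test function is supported in $(0,\infty)$.
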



\section{Setting the stage: Petersson--Poisson--Reciprocity}\label{PPR}

Sects. \ref{PPR}--\ref{Backwd} are devoted to proving Thm. \ref{ourmainthmtwist}. Let $g\in C_{c}^{\infty}(0,\infty)$ be a given test function. Define
\begin{align}\label{truncIsum}
I_{k}(\ell; \chi ) \ :=  \ \sum_{n=1}^{\infty}  \ g(n) \chi(n) \  \sideset{}{^h}{\sum}_{f \in \mathcal{B}_{k}(1)}  \lambda_{f}(\ell) \lambda_{f}(n).
\end{align} 
 Our analysis of this article applies to all even integers $k\ge 6$,  though it is well-known by the Riemann--Roch theorem that there is no non-zero holomorphic cusp form of level $1$ and weight $k<12$.


\subsection{Step 1: Petersson trace formula}\label{Petersect}

Applying \eqref{Peter} to (\ref{truncIsum}), followed by opening up the Kloosterman sums by its definition, we obtain
\begin{align}\label{finiPet}
		I_{k}(\ell; \chi )  =   g(\ell)\chi(\ell) \ + \ 2\pi i^{-k} \sum_{c=1}^{\infty} \ c^{-1} \ \sideset{}{^*}{\sum}_{x \, (c)}  \, e_c\left(\ell \overline{x}\right)\sum_{n} \ g(n)\chi(n) \, J_{k-1}\left(\frac{4\pi\sqrt{n\ell}}{c}\right) \, e_{c}(nx). 
\end{align}

\subsection{Step 2: Poisson summation}\label{sect:step2}

We apply Lem. \ref{lemPois} to the $n$-sum of (\ref{finiPet}), which gives
\begin{align*}
		I_{k}(\ell; \chi )  = & \ g(\ell) \chi(\ell)\, + \, 2\pi i^{-k} \sum_{c=1}^{\infty} \, \frac{1}{c} \, \sideset{}{^*}{\sum}_{x \, (c)}   e_c\left(\ell \overline{x}\right) \cdot \frac{1}{cq}\,  \sum_{m} \ \bigg(\, \sum_{\gamma\, (cq)}  \chi(\gamma)e_c\left(\gamma x\right)  e_{cq}\left(m\gamma\right)\bigg) \int_{0}^{\infty} g(y) J_{k-1}\bigg(\frac{4\pi\sqrt{y\ell}}{c}\bigg) e\bigg(-\frac{my}{cq}\bigg) \, dy.
\end{align*}
The $\gamma$-sum can be decomposed via $\gamma= \alpha+\beta q$ with $\alpha \, (\bmod\, q)$ and $\beta \, (\bmod\, c)$, i.e., \begin{align}\label{FirstPoissonCharComp}
    \sum_{\gamma\, (cq)} \ \chi(\gamma)e_c\left(\gamma x\right)  e_{cq}\left(m\gamma\right) \ =& \ \sum_{\alpha\,(q)}\sum_{\beta\,(c)}\chi(\alpha)e_{cq}\left((\alpha+\beta q)(qx+m)\right)\nonumber\\
     \ =& \ c\, \delta(xq\equiv -m\,(c))\sum_{\alpha\,(q)}\chi(\alpha)e_q\left(\alpha(qx+m)/c\right)\nonumber\\
    \ =& \ c\sqrt{q}\epsilon_\chi\delta(xq\equiv -m\,(c))\overline{\chi}\left(\frac{qx+m}{c}\right),
\end{align}
where  the last line follows from the primitivity of  $\chi \, (\bmod\, q)$; see \cite[eq. (3.12)]{IK}. Hence, we get \begin{align}\label{IafterPoisson}
		I_{k}(\ell; \chi )   =    g(\ell) \chi(\ell) 
  \, + \,   2\pi i^{-k} \frac{\epsilon_\chi}{\sqrt{q}}\sum_{c=1}^{\infty} \,  c^{-1} \ \sideset{}{^*}{\sum}_{x \, (c)}  \, e_c\left(\ell \overline{x}\right) \, & \sum_{m\equiv -xq\, (c)} \, \overline{\chi}\left(\frac{qx+m}{c}\right)\int_{0}^{\infty} \,  g(y)  J_{k-1}\bigg(\frac{4\pi\sqrt{y\ell}}{c}\bigg) e\bigg(-\frac{my}{cq}\bigg)  dy.
\end{align}


We split the $c$-sum of (\ref{IafterPoisson}) by $c=c_0c'$, where $c_0:=(c,q^\infty)$ and $(c',q)=1$. In particular, we have
\begin{align}\label{FirstPoissonCharComp2}
    \sideset{}{^*}{\sum}_{\substack{x \, (c)\\m\equiv -xq\, (c)}}  \, e_c\left(\ell \overline{x}\right)\overline{\chi}\left(\frac{qx+m}{c}\right) \ = \ \chi(c')e_{c'}\left(-\ell q\overline{c_0m}\right)\sumast_{\substack{\alpha\, (c_0)\\ m\equiv-\alpha q\, (c_0)}}e_{c_0}\left(\ell\overline{\alpha c'}\right)\overline{\chi}\bigg(\frac{\alpha q+m}{c_0}\bigg).
\end{align}

Notice that the dual zeroth frequency (i.e., the term $m=0$) of (\ref{IafterPoisson})  is non-vanishing only when $c_0|q$ and $c'=1$.
In this case, the $\alpha$-sum is given by \begin{align*}
    \sumast_{\alpha\, (c_0)}e_{c_0}\left(\ell\overline{\alpha }\right)\overline{\chi}\left(\alpha \frac{q}{c_0}\right) \ = \ \delta(c_0=q)\sum_{\alpha\,(q)}\overline{\chi}(\alpha)e_q\left(\ell\overline{\alpha}\right) \ = \ \sqrt{q}\epsilon_{\chi}\overline{\chi}(\ell).
\end{align*}
We extract the zeroth frequency from the rest of the frequencies, resulting in the expression:
\begin{align}\label{dualPoisspl}
		I_{k}(\ell; \chi )  \, = \,  g(\ell)\chi(\ell) \ &+ \ 2\pi i^{-k}\frac{\epsilon_\chi^2}{q}\overline{\chi}(\ell)	\int_{0}^{\infty} \,  g(y)J_{k-1}\bigg(\frac{4\pi\sqrt{y\ell}}{q}\bigg)  \, dy \, + \,  S_k(\ell; \chi),
\end{align}
where \begin{align}\label{SkDef}
    S_k(\ell; \chi)\ := \ &\ 2\pi i^{-k} \frac{\epsilon_\chi}{\sqrt{q}}\sum_{c_0|q^\infty}\sum_{\substack{c'\geq1\\ (c',q)=1}} \sum_{\substack{m\neq0 \\ (m,c')=1}} \  \frac{\chi(c')}{c_0c'} \, e_{c'}\left(-\ell q\overline{c_0m}\right)\sumast_{\substack{\alpha\, (c_0)\\ m\equiv-\alpha q\, (c_0)}}e_{c_0}\left(\ell\overline{\alpha c'}\right)\overline{\chi}\left(\frac{\alpha q+m}{c_0}\right)
    \nonumber\\
    & \hspace{7.5cm}\times \int_{0}^{\infty} \,  g(y) \, J_{k-1}\bigg(\frac{4\pi\sqrt{y\ell}}{c_0c'}\bigg) e\bigg(-\frac{my}{c_0c'q}\bigg) \, dy.
\end{align}


\subsection{Step 3: Additive reciprocity.}\label{Recip} 

Applying  (\ref{reciprel}) to the factor $ e_{c'}\left(-\ell q\overline{c_0m}\right)$ in (\ref{SkDef}), it follows that
\begin{align}\label{afterrep}
    S_k(\ell; \chi)  \ =\ & \  2\pi i^{-k} \frac{\epsilon_\chi}{\sqrt{q}}\sum_{c_0|q^\infty} \sum_{\substack{c'\geq1\\ (c',q)=1}} \sum_{\substack{m\neq0 \\ (m,c')=1}}  \  \frac{\chi(c')}{c_0c'}  \ e\left(\frac{\ell q\overline{c'}}{c_0m}-\frac{\ell q}{c_0c'm}\right)\sumast_{\substack{\alpha\, (c_0)\\ m\equiv-\alpha q\, (c_0)}}e_{c_0}\left(\ell\overline{\alpha c'}\right)\overline{\chi}\left(\frac{\alpha q+m}{c_0}\right)
    \nonumber\\
    &\hspace{7cm}\times \int_{0}^{\infty} \,  g(y) J_{k-1}\bigg(\frac{4\pi\sqrt{y\ell}}{c_0c'}\bigg) e\bigg(-\frac{my}{c_0c'q}\bigg) \, dy.
\end{align}


\section{Step 4: Preparation on the \texorpdfstring{$c_0,c'$}{c0,c'}-sums}\label{applyhankcanc}

To prepare for Poisson summation in the $c_0,c'$-sums and for swapping the roles of $m$ and $c_0c'$ at a later stage, two crucial components are needed:
\begin{itemize}
    \item (Analytic) Transform the $c'$-sum with $c'\geq1$ in (\ref{afterrep}) to a sum  over all integers, and remove the singularity at $c'=0$ by suitable analytic manipulations;

    \item (Arithmetic) Transform the character sum in (\ref{afterrep}) into a suitable form that facilitates the eventual combination of $c_0$- and $c'$-sum. 
\end{itemize}
\noindent The second point requires a fairly intricate analysis of character sums, see Sects. \ref{arithpre}--\ref{reciprecom}.


\subsection{Step 4.1: Analytic preparation}\label{Hankana}

  The $c'$-sum of (\ref{afterrep}) can be rewritten to sum over all non-zero integers. Indeed, observe that the contribution from $m<0$ in \eqref{afterrep} is given by \begin{align}
     &\hspace{20pt} 2\pi i^{-k} \frac{\epsilon_\chi}{\sqrt{q}}\sum_{c_0|q^\infty}\sum_{\substack{c'\geq1\\(c',q)=1}} \, \sum_{\substack{m\geq1 \\ (m,c')=1}} \  \frac{\chi(c')}{c_0c'} \, e\left(-\frac{\ell q\overline{c'}}{c_0m}+\frac{\ell q}{c_0c'm}\right)\sumast_{\substack{\alpha\, (c_0)\\ m\equiv-\alpha q\, (c_0)}}e_{c_0}\left(-\ell\overline{\alpha c'}\right)\overline{\chi}\left(-\frac{\alpha q+m}{c_0}\right)
    \nonumber\\
    &\hspace{7cm}\times \int_{0}^{\infty} \,  g(y) \, J_{k-1}\left(\frac{4\pi\sqrt{y\ell}}{c_0c'}\right) e\left(\frac{my}{c_0c'q}\right) \ dy, \label{negapartSk}
\end{align}
upon making the changes of variables $m\mapsto -m$ and $\alpha\mapsto -\alpha$. The change of variables $c'\mapsto -c'$ and the fact that $J_{k-1}(-x) = -J_{k-1}(x)$ (follows from (\ref{powerserJ}) and $k\in 2\mathbb{N}$) allow us to rewrite (\ref{negapartSk}) as: 
\begin{align*}
   &2\pi i^{-k} \frac{\epsilon_\chi}{\sqrt{q}}\sum_{c_0|q^\infty} \sum_{\substack{c'\leq-1\\(c',q)=1}} \sum_{\substack{m\geq1 \\(m,c')=1}} \  \frac{\chi(c')}{c_0c'} \, e\left(\frac{\ell q\overline{c'}}{c_0m}-\frac{\ell q}{c_0c'm}\right)\sumast_{\substack{\alpha\, (c_0)\\ m\equiv-\alpha q\, (c_0)}}e_{c_0}\left(\ell\overline{\alpha c'}\right)\overline{\chi}\left(\frac{\alpha q+m}{c_0}\right)
    \nonumber\\
    &\hspace{6.2cm}\times \int_{0}^{\infty} \,  g(y) \, J_{k-1}\left(\frac{4\pi\sqrt{y\ell}}{c_0c'}\right) e\left(-\frac{my}{c_0c'q}\right) \ dy.
\end{align*}
Hence, the expression (\ref{afterrep}) becomes: \begin{align}\label{signpiece}
    S_k(\ell; \chi)  \ = \ &\  2\pi i^{-k} \frac{\epsilon_\chi}{\sqrt{q}}\sum_{c_0|q^\infty}\sum_{\substack{c'\neq0\\ (c',q)=1}} \sum_{\substack{m\geq1 \\ (m,c')=1}} \  \frac{\chi(c')}{c_0c'}  \, e\left(\frac{\ell q\overline{c'}}{c_0m}-\frac{\ell q}{c_0c'm}\right)\sumast_{\substack{\alpha\, (c_0)\\ m\equiv-\alpha q\, (c_0)}}e_{c_0}\left(\ell\overline{\alpha c'}\right)\overline{\chi}\left(\frac{\alpha q+m}{c_0}\right)
    \nonumber\\
    &\hspace{5.5cm}\times \int_{0}^{\infty} \,  g(y) \, J_{k-1}\left(\frac{4\pi\sqrt{y\ell}}{c_0c'}\right) e\left(-\frac{my}{c_0c'q}\right) \ dy.
\end{align}

  The last integral converges absolutely since $g\in C_{c}^{\infty}(0, \infty)$.  By the dominated convergence theorem, 
\begin{align*}
  \int_{0}^{\infty} g(y) J_{k-1}\left(\frac{4\pi\sqrt{y\ell}}{c_0c'}\right)   e\left(-\frac{my}{c_0c'q}\right) dy =  \lim_{\epsilon\to 0+}   \int_{0}^{\infty}  g(y)  J_{k-1}\left(\frac{4\pi\sqrt{y\ell}}{c_0c'}\right) \exp\left(-2\pi\left(\epsilon+ \frac{im}{c_{0}c'q}\right)y\right)  dy. 
\end{align*}
The Hankel inversion formula (Lem. \ref{hankelinv}) with the change of variables $x\to x/q^2$ give
\begin{align}\label{hankinter}
  \hspace{5pt} \int_{0}^{\infty}  g(y) & J_{k-1}\left(\frac{4\pi\sqrt{y\ell}}{c_0c'}\right)  e\left(-\frac{my}{c_0c'q}\right) dy \nonumber\\
   \ &\hspace{-30pt}=   \frac{2\pi}{q^2}  \lim_{\epsilon\to 0+}   \int_{0}^{\infty} (\mathcal{H}_{k}g)\left(x/q^2\right)  \int_{0}^{\infty}  J_{k-1}\left(\frac{4\pi\sqrt{xy}}{q}\right)   J_{k-1}\left(\frac{4\pi\sqrt{y\ell}}{c_0c'}\right) \exp\left(-2\pi\left(\epsilon+ \frac{im}{c_{0}c'q}\right)y\right) 
  dy  dx,
\end{align}
where the interchange of the order of integration is permitted by absolute convergence and the decay of the Hankel transform $\mathcal{H}_{k}g$ of $g$.  Now, we are in a position to apply Lem.\ref{keyspec}. In other words,
\begin{align}
   \hspace{10pt}   \int_{0}^{\infty} g(y)  & J_{k-1}\left(\frac{4\pi\sqrt{y\ell}}{c_0c'}\right)   e\left(-\frac{my}{c_0c'q}\right)  dy \nonumber\\ 
      &\hspace{-30pt}=  \frac{2\pi}{q^2}  \lim_{\epsilon\to 0+}   \int_{0}^{\infty}  (\mathcal{H}_{k}g)\left(x/q^2\right)  \frac{i^{1-k}}{2\pi \left(\epsilon+ \frac{im}{c_{0}c'q}\right)} J_{k-1}\left( \frac{4\pi i\sqrt{\ell x}}{c_{0}c'q \epsilon +im }\right) \exp\left(- 2\pi \, \frac{x/q^2+\ell/c^2}{\epsilon+ \frac{im}{c_{0}c'q}}\right)  dx.
\end{align}
By the decay of  $\mathcal{H}_{k}g$ and dominated convergence again, it follows from continuity that
\begin{align}
     \hspace{5pt} \int_{0}^{\infty} g(y) J_{k-1}\left(\frac{4\pi\sqrt{y\ell}}{c_0c'}\right)  e\left(-\frac{my}{c_0c'q}\right) dy 
     \,  = \,  \frac{i^{-k}}{q}  \frac{c_{0}c'}{m}\, e\left(\frac{\ell q}{c_{0}c'm}\right) \int_{0}^{\infty}  (\mathcal{H}_{k}g)\left(x/q^2\right)   J_{k-1}\left(\frac{4\pi\sqrt{\ell x}}{m}\right) e\left(\frac{c_{0}c'x}{qm}\right) dx.
\end{align}
Substitute the last expression back into (\ref{signpiece}) and observe the cancellations in (1). a pair of exponential phases, and (2). a pair of factors $c_{0}c'$.  Thus, we obtain
    \begin{align}\label{SkAfterAnalyticPrep}
         S_k(\ell; \chi)  \, = \,   \frac{2\pi}{q} \frac{\epsilon_\chi}{\sqrt{q}}  \sum_{c_0|q^\infty}  \sum_{\substack{c'\neq0\\ (c',q)=1}} \   \sum_{\substack{m\geq1 \\ (m,c')=1}} \,  \frac{1}{m}   \chi(c') & \, e_{c_0m}\left(\ell q\overline{c'}\right)\, \sumast_{\substack{\alpha\, (c_0)\\ m\equiv-\alpha q\, (c_0)}}e_{c_0}\left(\ell\overline{\alpha c'}\right)\overline{\chi}\left(\frac{\alpha q+m}{c_0}\right) \nonumber\\
         &\times  \,    \int_{0}^{\infty} \ (\mathcal{H}_{k}g)\left(x/q^2\right)   J_{k-1}\left(\frac{4\pi\sqrt{\ell x}}{m}\right) e\left(\frac{c_{0}c'x}{qm}\right)  \, dx.
    \end{align}


We must show that the order of summation can be interchanged for Sect. \ref{reversPois}. This requires $k\ge 6$ and follows from trivially bounding the sums (the $\alpha$-sum by $c_{0}$) and integration by parts thrice in (\ref{SkAfterAnalyticPrep}), cf. \cite{KLev}. Indeed,  by Lem. \ref{lemderHank} and (\ref{asympJbess}), observe that
\begin{align}
   \left(\frac{c_{0}c'}{m}\right)^{3} \int_{0}^{\infty} \, (\mathcal{H}_{k}g)\left(x/q^2\right)   J_{k-1}\left(\frac{4\pi\sqrt{\ell x}}{m}\right) e\left(\frac{c_{0}c'x}{qm}\right)  \, dx \, \ll_{\ell, q, k} \, \frac{1}{m^{k-1}}.
\end{align}


\subsection{Step 4.2:\,   Arithmetic preparation}\label{arithpre}

We now analyze the arithmetic component of (\ref{SkAfterAnalyticPrep}), i.e., \begin{align}\label{char1}
    \chi(c')  e_{c_0m}\left(\ell q\overline{c'}\right)\sumast_{\substack{\alpha\, (c_0)\\ m\equiv-\alpha q\, (c_0)}}e_{c_0}\left(\ell\overline{\alpha c'}\right)\overline{\chi}\left(\frac{\alpha q+m}{c_0}\right).
\end{align}
For technical convenience, we consider a more general character sum define as follows.  Let $a,b,h,r,u,v$ be integers such that $ab|r^\infty$ and $(uv,r)=1$. Let $\psi$ be a primitive character mod $r$. Define \begin{align}\label{keycharsum}
    \mathcal{C}_\psi^h(a,u,b,v):=\psi(u) \sumast_{\substack{\alpha\, (a)\\ bv\equiv-\alpha r\, (a)}}e_a\left(h\overline{\alpha u}\right)\overline{\psi}\left(\frac{\alpha r+bv}{a}\right).
\end{align}
We decouple the $m$-sum of (\ref{SkAfterAnalyticPrep}) by $m=m_0m'$ (as before), where $m_0:=(m,q^\infty)$ and $(m',q)=1$. Then \eqref{char1} is equal to $e_{c_0m_0m'}(\ell q\overline{c'})\mathcal{C}_\chi^\ell(c_0,c',m_0,m'),$ and
\begin{align}\label{roleofkeychar}
     S_k(\ell; \chi) \ = \   & \frac{2\pi}{q} \frac{\epsilon_\chi}{\sqrt{q}} \, \sum_{c_0|q^\infty} \, \sum_{m_{0} \mid q^{\infty}} \, \sum_{\substack{m'\ge 1\\ (m',q)=1}} \, \frac{1}{m_{0}m'}\sum_{\substack{c'\neq0\\ (c',m'q)=1}} \  e_{c_0m_0m'}\left(\ell q\overline{c'}\right)\mathcal{C}_\chi^\ell(c_0,c',m_0,m')
    \nonumber\\
    &\hspace{4cm} \times  \,    \int_{0}^{\infty} \ (\mathcal{H}_{k}g)\left(x/q^2\right)   J_{k-1}\left(\frac{4\pi\sqrt{\ell x}}{m_{0}m'}\right) e\left(\frac{c_{0}c'x}{qm_{0}m'}\right)  \ dx. 
\end{align}

We have the following twisted multiplicativity for the character sum (\ref{keycharsum}). 

\begin{lemma}\label{lemma:charmult}
    Let $a_1,a_2,b_1,b_2,h,r_1,r_2,u,v$ be integers such that $a_1b_1|r_1^\infty$, $a_2b_2|r_2^\infty$, and $(r_1,r_2)=(uv,r_1r_2)=1$. Let $\psi_1$ and $\psi_2$ be primitive characters mod $r_1$ and $r_2$ respectively. Then \begin{align*}
        \mathcal{C}_{\psi_1\psi_2}^h(a_1a_2,u,b_1b_2,v)=&\ \overline{\psi_1(b_2^2)\psi_2(b_1^2)}\mathcal{C}_{\psi_1}^{hr_2}(a_1,a_2b_2u,b_1,v)\mathcal{C}_{\psi_2}^{hr_1}(a_2,a_1b_1u,b_2,v)\\
        =&\ \overline{\psi_1(a_2^2)\psi_2(a_1^2)}\mathcal{C}_{\psi_1}^{hr_2}(a_1,u,b_1,a_2b_2v)\mathcal{C}_{\psi_2}^{hr_1}(a_2,u,b_2,a_1b_1v).
    \end{align*}
\end{lemma}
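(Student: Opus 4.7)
The plan is to prove both equalities by the Chinese Remainder Theorem, decomposing the $\alpha$-sum on the LHS and matching the local pieces at the two moduli $r_1$ and $r_2$; the two stated equalities correspond to two natural bijective substitutions of the summation variable.

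Since $a_1 \mid r_1^\infty$, $a_2 \mid r_2^\infty$ and $(r_1, r_2) = 1$, the moduli $a_1, a_2$ are coprime, so CRT gives $\alpha \pmod{a_1 a_2} \leftrightarrow (\alpha_1, \alpha_2) \in (\Z/a_1)^\ast \times (\Z/a_2)^\ast$, and the divisibility $b_1 b_2 v \equiv -\alpha r_1 r_2 \pmod{a_1 a_2}$ splits into two independent local conditions. For the first equality I perform the bijective substitutions $\alpha_1 \equiv b_2 \overline{r_2} \beta_1 \pmod{a_1}$ and $\alpha_2 \equiv b_1 \overline{r_1} \beta_2 \pmod{a_2}$ (throughout, $\overline{\cdot}$ denotes inverse modulo the surrounding modulus; these are well-defined by coprimality). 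Under these, the two local conditions become $b_1 v + \beta_1 r_1 \equiv 0 \pmod{a_1}$ and $b_2 v + \beta_2 r_2 \equiv 0 \pmod{a_2}$, matching exactly the conditions appearing in the two $\mathcal{C}_{\psi_i}$'s on the first RHS.

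The additive character factorizes by the standard CRT identity $e_{a_1 a_2}(\overline{\alpha u}) = e_{a_1}(\overline{a_2 \alpha u}) e_{a_2}(\overline{a_1 \alpha u})$, and after applying the substitutions the two factors become $e_{a_1}(h r_2 \overline{\beta_1 a_2 b_2 u})$ and $e_{a_2}(h r_1 \overline{\beta_2 a_1 b_1 u})$, which are precisely the exponentials inside $\mathcal{C}_{\psi_1}^{hr_2}(a_1, a_2 b_2 u, b_1, v)$ and $\mathcal{C}_{\psi_2}^{hr_1}(a_2, a_1 b_1 u, b_2, v)$. For the multiplicative character I set $X = (\alpha r_1 r_2 + b_1 b_2 v)/(a_1 a_2)$ and $X_i' = (\beta_i r_i + b_i v)/a_i$, and invoke the algebraic identity
\[
a_1 \bigl( a_2 X - b_2 X_1' \bigr) \;=\; r_1 \bigl( r_2 \alpha - b_2 \beta_1 \bigr).
\]
Since $r_2 \alpha \equiv r_2 \alpha_1 = b_2 \beta_1 \pmod{a_1}$ by the substitution, the right-hand side is divisible by $a_1$; cancelling $a_1$ yields $a_2 X \equiv b_2 X_1' \pmod{r_1}$, i.e., $X \equiv b_2 \overline{a_2} X_1' \pmod{r_1}$, and analogously $X \equiv b_1 \overline{a_1} X_2' \pmod{r_2}$. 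Factoring $\psi = \psi_1 \psi_2$ then gives $\overline{\psi}(X) = \overline{\psi_1}(X_1') \overline{\psi_2}(X_2') \cdot \psi_1(a_2) \overline{\psi_1}(b_2) \psi_2(a_1) \overline{\psi_2}(b_1)$.

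Combining the prefactor $\psi_1(u) \psi_2(u)$ with these character pieces on the LHS and comparing against the prefactors $\psi_1(a_2 b_2 u) \psi_2(a_1 b_1 u)$ generated by the two $\mathcal{C}_{\psi_i}$'s on the RHS, the excess characters collapse precisely to $\overline{\psi_1(b_2^2) \psi_2(b_1^2)}$, yielding the first claimed identity. The second equality is obtained by the analogous CRT argument using instead the substitutions $\alpha_1 \equiv \overline{a_2 r_2} \beta_1 \pmod{a_1}$, $\alpha_2 \equiv \overline{a_1 r_1} \beta_2 \pmod{a_2}$, which align the local conditions with the $v$-modified sums. The principal obstacle is the character identity: since $(a_1, r_1) > 1$ in general (as $a_1 \mid r_1^\infty$), one cannot simply invert $a_1$ modulo $r_1$; however, the algebraic identity above sidesteps this difficulty by reducing the required congruence to a divisibility built into the substitution itself.
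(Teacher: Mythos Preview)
Your proof is correct and follows essentially the same approach as the paper's own argument: apply CRT to split the sum over $\alpha \pmod{a_1a_2}$ into local sums modulo $a_1$ and $a_2$, then perform exactly the substitutions $\alpha_1 \mapsto b_2\overline{r_2}\beta_1$, $\alpha_2 \mapsto b_1\overline{r_1}\beta_2$ for the first equality and $\alpha_1 \mapsto \overline{a_2 r_2}\beta_1$, $\alpha_2 \mapsto \overline{a_1 r_1}\beta_2$ for the second. Your treatment of the character factor via the algebraic identity $a_1(a_2 X - b_2 X_1') = r_1(r_2\alpha - b_2\beta_1)$ is more explicit than the paper's (which simply asserts the outcome of the substitution), and your remark that this identity avoids the non-invertibility of $a_1$ modulo $r_1$ is a useful clarification that the paper leaves implicit.
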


\begin{proof}
    By the Chinese remainder theorem, we have \begin{align*}
        \mathcal{C}_{\psi_1\psi_2}^h(a_1a_2,u,b_1b_2,v)=&\psi_1(a_2u)\sumast_{\substack{\alpha_1\Mod{a_1}\\ b_1b_2v\equiv -\alpha_1 r_1r_2\,(a_1)}}e_{a_1}(h\overline{\alpha_1 a_2u})\overline{\psi_1}\left(\frac{\alpha_1r_1r_2+b_1b_2v}{a_1}\right)\\
        &\times \psi_2(a_1u)\sumast_{\substack{\alpha_2\Mod{a_2}\\ b_1b_2v\equiv -\alpha_2 r_1r_2\,(a_2)}}e_{a_2}(h\overline{\alpha_2 a_1u})\overline{\psi_2}\left(\frac{\alpha_2r_1r_2+b_1b_2v}{a_2}\right).
    \end{align*}
    Applying the change of variables $\alpha_1\mapsto \alpha_1b_2\overline{r_2}$ and $\alpha_2\mapsto \alpha_2b_1\overline{r_1}$ yields the first equality. To get the second equality, apply the change of variables $\alpha_1\mapsto \alpha_1\overline{a_2r_2}$ and $\alpha_2\mapsto \alpha_2\overline{a_1r_1}$ instead.
\end{proof}

\begin{remark}
    This lemma explains why the factor $e_{abv}\left(\ell q\overline{u}\right)$ is purposefully left out when defining $\mathcal{C}_\psi^\ell(\cdots)$. Otherwise, no multiplicative relation would hold as the factor is not multiplicative in  $a$, $b$.

\end{remark}


\section{Local analysis of character sums: proof of Theorem \ref{Thm:CharSum} }\label{sect:ArithPrep}

\begin{namedthm}[Restatement of Theorem \ref{Thm:CharSum}]\label{lemma:CharSum}
    Let $a, b, h, r, u, v$ be integers such that $ab|r^\infty$ and $(uv,r)=1$. Let $\psi$ be a character mod $r$. Then we have \begin{align*}
        \mathcal{C}_\psi^h(a,u,b,v) \ = \ e_{ab}\left(-hr\overline{uv}\right)\overline{\mathcal{C}_\psi^h(b,v,a,u)}.
    \end{align*}
\end{namedthm}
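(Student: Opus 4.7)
The plan is to reduce to prime-power moduli via the twisted multiplicativity of Lemma \ref{lemma:charmult} and then verify the local statement by a combination of $p$-adic stationary phase and a second use of the reciprocity relation $\overline m/c + \overline c/m \equiv 1/(mc) \pmod 1$.

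For the reduction, I would iterate Lemma \ref{lemma:charmult} over the primes dividing $r$. Writing $r = \prod_p p^{k_p}$, $a = \prod_p p^{A_p}$, $b = \prod_p p^{B_p}$, both $\mathcal{C}_\psi^h(a, u, b, v)$ and its swap factor compatibly as products over $p \mid r$ of local character sums $\mathcal{C}_{\psi_p}^{h r/p^{k_p}}(p^{A_p}, \widetilde u_p, p^{B_p}, \widetilde v_p)$ (where $\widetilde u_p, \widetilde v_p$ are $u, v$ modified by appropriate factors involving the complementary $a_q b_q$); the global phase $e_{ab}(-hr\overline{uv})$ decomposes via the Chinese remainder theorem into the corresponding local phases, and the ancillary $\psi_p$-factors generated by the two equivalent forms of Lemma \ref{lemma:charmult} cancel against each other when one form is applied to the LHS and the other to $\overline{\mathcal{C}_\psi^h(b, v, a, u)}$. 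It thus suffices to prove the identity in the prime-power case $r = p^k$, $a = p^A$, $b = p^B$ with $\psi$ primitive modulo $p^k$.

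In the prime-power case, the constraint $p^B v + \alpha p^k \equiv 0 \pmod{p^A}$ admits solutions with $(\alpha, p) = 1$ only in (I) $A \le \min(B, k)$, where the constraint is automatic, or (II) $A > B$ with $k = B$, where $\alpha$ is pinned to $\alpha \equiv -v \pmod{p^{A-B}}$. Within Case I, the factor $\overline\psi(\alpha p^{k-A} + p^{B-A}v)$ is non-vanishing only when $A \in \{B, k\}$, producing the three non-trivial sub-cases $A = k = B$, $A = B < k$ (both self-symmetric under the swap), and $A = k < B$ (the swap of Case II); all remaining configurations force both sides to vanish.

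In each non-trivial case, I would substitute a natural new variable to linearize $\overline\psi$ (e.g.\ $\gamma = \alpha + v$ when $A = k = B$, or $\alpha = -v + p^{A-B}\beta$ in Case II), then expand the inverse $\overline{\alpha u}$ as a truncated $p$-adic geometric series such as
\[
\overline{(-v + p^{A-B}\beta)u} \ \equiv \ -\overline{uv}\sum_{j \ge 0}\bigl(p^{A-B}\beta \overline v\bigr)^{j} \pmod{p^A},
\]
retaining only the terms with $j(A-B) < A$. Splitting $e_{p^A}(h\overline{\alpha u})$ accordingly extracts a stationary-value factor $e_{p^A}(-h\overline{uv})$ together with a Gauss-sum type sum in $\beta$ weighted by $\overline\psi$; applying reciprocity a second time recasts the stationary factor in the modulus $p^{A+B}$, producing the prescribed phase $e_{p^{A+B}}(-hp^k\overline{uv})$ up to multiplicative character values (including the $\psi(-1)$ arising from $\tau(\overline\psi) = \psi(-1)\overline{\tau(\psi)}$). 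An analogous reformulation of $\overline{\mathcal{C}_\psi^h(p^B, v, p^A, u)}$ produces a matching expression (with the match in the self-symmetric Case $A = k = B$ being exhibited by the involution $\gamma' \mapsto uv/\gamma^*$), and the identity follows. The main obstacle is tracking the $p$-adic truncations when the valuations $A, B, k$ are only mildly separated (e.g.\ $k < B < 2k$, or $B < A < 2B$), since several terms of the geometric expansion survive and each contributes non-trivially to the Gauss sum; once the correct substitution and truncation are identified, however, the phase matching is an automatic consequence of the reciprocity relation and no deeper combinatorial structure is required.
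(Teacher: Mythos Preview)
Your overall strategy---reduce to prime-power modulus via Lemma~\ref{lemma:charmult}, classify the non-vanishing local configurations, and verify each by substitution---matches the paper's, and your case analysis correctly isolates the same three non-trivial configurations that the paper finds.

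Where you diverge is in the local computation itself. You propose expanding $\overline{\alpha u}$ as a truncated $p$-adic geometric series and matching the resulting higher-order exponential terms against the other side via stationary phase, conceding that this is delicate when the valuations are close and deferring the verification to ``once the correct substitution and truncation are identified.'' But that step is not automatic from reciprocity: in your Case~II, for instance, the geometric tail must be shown to equal a multiplicative character value of the shape $\psi(1+p^{A-B}\beta\,\overline v)$, which genuinely requires the Postnikov/$p$-adic logarithm machinery and real bookkeeping. No Gauss sums or stray $\psi(-1)$ factors actually appear in the argument either; that framing is a detour.

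The paper sidesteps all of this. After a preliminary change $\alpha\mapsto\alpha v$, in the case $s=t\le k$ it applies the single exact substitution $\beta=(\overline{\alpha p^{k-s}+1}-1)/p^{k-s}$, and in the case $t=k<s$ it composes $\alpha+1=p^{s-t}\beta$ with $\beta=\overline{\gamma+p^{s-t}}$. These are exact bijections that carry the left-hand sum term-by-term into the right-hand sum, reducing the identity to elementary congruences such as
\[
p^{s-t}\,\overline{\gamma+p^{s-t}}-1 \ \equiv\ -\,\overline{1+p^{s-t}\overline\gamma} \pmod{p^s},
\]
with no truncation and no stationary-phase analysis at all. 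Your geometric-series route can probably be pushed through, but it is substantially harder than necessary, and the step you flag as the main obstacle is precisely the one you have not actually carried out.
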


\begin{proof}
    We first consider the case when $r=p^k$, where $p$ is a prime and $k\geq1$. Since $ab|r^\infty=p^\infty$, we may write $a=p^s$ and $b=p^t$ for some $s, t\geq0$. In this, we have 
    \begin{align*}
    \mathcal{C}_\psi^h(p^s,u,p^t,v) \ = \ \psi(u)\sumast_{\substack{\alpha\, (p^s)\\ p^tv\equiv-\alpha p^k\, (p^s)}}e_{p^s}\left(h\overline{\alpha u}\right)\overline{\psi}\left(\frac{\alpha p^k+p^tv}{p^s}\right),
    \end{align*}
     \begin{align*}
        e_{p^{s+t}}\left(-h p^k\overline{uv}\right)\overline{\mathcal{C}_\psi^h(p^t,v,p^s,u)} \ =& \ \overline{\psi}(v) e_{p^{s+t}}\left(-h p^k\overline{uv}\right)\sumast_{\substack{\alpha\, (p^t)\\ p^su\equiv-\alpha p^k\, (p^t)}}e_{p^t}\left(-h\overline{\alpha v}\right)\psi\left(\frac{\alpha p^k+p^su}{p^t}\right).
    \end{align*}
    
    \textbf{Claim:}  Unless one of the following holds: 
\begin{enumerate*}
    \item  $s=t\leq k$,
    
    \item  $t=k<s$, or

    \item   $s=k<t$, 
\end{enumerate*} we have
    \begin{align*}
        \mathcal{C}_\psi^h(p^s,u,p^t,v)  =  0  = \overline{\mathcal{C}_\psi^h(p^t,v,p^s,u)}.
    \end{align*}

    Indeed, the congruence condition for $\mathcal{C}_\psi^h(p^s,u,p^t,v)$ implies that it is equal to $0$ unless one of the following holds: $s\leq t\leq k$, (2) or (3). If $s<t\leq k$, then
    \begin{align*}
        \overline{\psi}\left(\frac{\alpha p^k+p^tv}{p^s}\right) \ = \ \overline{\psi}\left(\alpha p^{k-s}+p^{t-s}v\right) \ = \ 0, 
    \end{align*}
    and hence $\mathcal{C}_\psi^h(p^s,u,p^t,v)=0$ unless (1) or (2) or (3) holds. Similarly, the congruence condition implies that $\overline{\mathcal{C}_\psi^h(p^t,v,p^s,u)}=0$ unless $t\leq s\leq k$ or (2) or (3), and the presence of $\psi\left(\frac{\alpha p^k+p^su}{p^t}\right)$ implies that it is also $0$ if $t<s\leq k$. This proves our claim. 

     We apply a change of variable $\alpha\mapsto \alpha v$ to obtain 
    \begin{align*}
        \mathcal{C}_\psi^h(p^s,u,p^t,v) \ = \ \psi(u\overline{v}) \sumast_{\substack{\alpha\, (p^s)\\ p^t\equiv-\alpha p^k\, (p^s)}}e_{p^s}\left(h \overline{\alpha uv}\right)\overline{\psi}\left(\frac{\alpha p^k+p^t}{p^s}\right).
    \end{align*}

    \noindent \underline{Case 1:} $s=t\leq k$. In this case, we have \begin{align*}
        \mathcal{C}_\psi^h(p^s,u,p^s,v)=&\ \psi(u\overline{v})\sumast_{\substack{\alpha\, (p^s)}}e_{p^s}\left(h \overline{\alpha uv}\right)\overline{\psi}\left(\alpha p^{k-s}+1\right)\\
        =&\ \psi(u\overline{v})e_{p^{2s}}\left(-hp^k \overline{uv}\right)\sumast_{\substack{\alpha\, (p^s)}}e_{p^s}\left(h \overline{uv}(\overline{\alpha}+p^{k-s})\right)\overline{\psi}\left(\alpha p^{k-s}+1\right).
    \end{align*}
    Notice that since $p\nmid \alpha$, we have $p^{k-s}||(\overline{\alpha p^{k-s}+1}-1)$. Hence, the change of variables $\beta = (\overline{\alpha p^{k-s}+1}-1)/p^{k-s}$ is admissible.
    When $k=s$, the quantity $\overline{\alpha+1}$ is well-defined due to the presence of $\psi$. With the above change of variables $(\bmod\, p^s)$, it follows that
    \begin{align*}
            \overline{\alpha p^{k-s}+1} \ \equiv \ \beta p^{k-s}+1 \, \Mod{p^s},
    \end{align*}
    \begin{align*}
        \overline{\alpha}  \equiv\overline{(\overline{\beta p^{k-s}+1}-1)/p^{k-s}} 
         \equiv   \overline{(1-\beta p^{k-s}-1)\overline{\beta p^{k-s}+1}/p^{k-s}} 
      \equiv  -\overline{\beta}(\beta p^{k-s}+1)  \equiv  -p^{k-s}-\overline{\beta}\, \Mod{p^s}.
    \end{align*}
    Hence, \begin{align*}
        \mathcal{C}_\psi^h(p^s,u,p^s,v)= \psi(u\overline{v})e_{p^{2s}}\left(-h p^k\overline{uv}\right)\sumast_{\substack{\beta\, (p^s)}}e_{p^s}\left(-h \overline{\beta uv}\right)\psi\left(\beta p^{k-s}+1\right).
    \end{align*}
    With another change of variables $\beta=\gamma\overline{u}$, we get \begin{align*}
        \mathcal{C}_\psi^h(p^s,u,p^s,v) \ = \ e_{p^{2s}}\left(-h p^k\overline{uv}\right)\overline{\mathcal{C}_\psi^h(p^s,v,p^s,u)}.
    \end{align*}

    \noindent \underline{Case 2:} $t=k<s$. In this case, we have \begin{align*}
        \mathcal{C}_\psi^h(p^s,u,p^t,v) \ = \  \psi(u\overline{v}) \sumast_{\substack{\alpha\, (p^s)\\ \alpha\equiv-1 \, (p^{s-t})}}e_{p^s}\left(h \overline{\alpha uv}\right)\overline{\psi}\left(\frac{\alpha +1}{p^{s-t}}\right).
    \end{align*}
    Apply the change of variables $p^{s-t}\beta=\alpha+1$ and observe the fact that $s>t$, we have \begin{align*}
        \mathcal{C}_\psi^h(p^s,u,p^t,v) \ = \  \psi(u\overline{v}) \sumast_{\substack{\beta\, (p^t)}}e_{p^s}\left(h \overline{uv(p^{s-t}\beta-1)}\right)\overline{\psi}\left(\beta\right).
    \end{align*}
    Another change of variables $\beta=\overline{\gamma+p^{s-t}}$, which is admissible as $s>t$, yields 
    \begin{align*}
        \mathcal{C}_\psi^h(p^s,u,p^t,v) \ = \  \psi(\overline{v}) \sumast_{\substack{\gamma\, (p^t)}}e_{p^s}\left(h \overline{uv(p^{s-t}\overline{\gamma+p^{s-t}}-1)}\right)\psi\left(\gamma u+p^{s-t}u\right).
    \end{align*}
    Using \begin{align*}
        p^{s-t} \, \overline{\gamma+p^{s-t}}-1\equiv   p^{s-t}\overline{\gamma}\overline{1+p^{s-t}\overline{\gamma}} \ - \ 1 
        \equiv  (1+p^{s-t}\overline{\gamma}-1)\overline{1+p^{s-t}\overline{\gamma}} \ - \ 1
        \equiv  -\overline{{1+p^{s-t}\overline{\gamma}}} \, \Mod{p^s},
    \end{align*}
    we arrive at \begin{align*}
        \mathcal{C}_\psi^h(p^s,u,p^t,v) \ = \  \psi(\overline{v}) \sumast_{\substack{\gamma\, (p^t)}}e_{p^s}\left(-h \overline{uv}(1+p^{s-t}\overline{\gamma})\right)\psi\left(\gamma u+p^{s-t}u\right).
    \end{align*}
    A final change of variables $\gamma\mapsto \gamma \overline{u}$ implies
    \begin{align*}
        \mathcal{C}_\psi^h(p^s,u,p^t,v) \ = \ e_{p^s}\left(-h \overline{uv}\right)\overline{\mathcal{C}_\psi^h(p^t,v,p^s,u)}.
    \end{align*}

    \noindent \underline{Case 3:} $s=k<t$. In this final case, we make use of Case 2 to deduce the answer.  For $s=k<t$, we have \begin{align*}
        \mathcal{C}_\psi^h(p^t,v,p^s,u)=e_{p^t}\left(-h \overline{uv}\right)\overline{\mathcal{C}_\psi^h(p^s,u,p^t,v)},
    \end{align*}
    and this yields \begin{align*}
        \mathcal{C}_\psi^h(p^s,u,p^t,v)=e_{p^t}\left(-h \overline{uv}\right)\overline{\mathcal{C}_\psi^h(p^t,v,p^s,u)}.
    \end{align*}

    Combining all three cases together, we have proved that for $r=p^k$ for some prime $p$ and $k\geq1$, 
    \begin{align}\label{primepoweq}
        \mathcal{C}_\psi^h(a,u,b,v) \ = \ e_{ab}\left(-h r\overline{uv}\right)\overline{\mathcal{C}_\psi^h(b,v,a,u)}
    \end{align}
    for any $a,b,u,v$ with $ab|r^\infty$, $(uv,r)=1$ and any  character mod $r$. Finally, applying the first and second equality of Lem. \ref{lemma:charmult} to the left-hand and right-hand side of (\ref{primepoweq}) respectively, with the observation:
     \begin{align}\label{anotrecip}
        e\left(-\frac{h r\overline{uv}}{a_1a_2b_1b_2}\right) \ = \ e\left(-\frac{h r\overline{a_2b_2uv}}{a_1b_1}-\frac{h r\overline{a_1b_1uv}}{a_2b_2}\right), 
    \end{align}
    we see that both sides satisfy the same multiplicative relations. This yields the result for general $r$.
\end{proof}


\section{Backward maneuver: Finishing the Proof of Theorem \ref{ourmainthmtwist}}\label{Backwd}

\subsection{Step 5: Reciprocity and recombining sums}\label{reciprecom}

From Thm. \ref{Thm:CharSum} and additive reciprocity, observe that
\begin{align}
 e_{c_0m_0m'}\left(\ell q\overline{c'}\right)\mathcal{C}_\chi^\ell(c_0,c',m_0,m') 
    =&\ e\left(\frac{\ell q\overline{c'}}{c_0m_0m'}-\frac{\ell q\overline{c'm'}}{c_0m_0}\right)\overline{\mathcal{C}_\chi^\ell(m_0,m',c_0,c')}\nonumber\\
    \ =& \ e_{m'}\left(\ell q\overline{c_0c'm_0}\right)\overline{\mathcal{C}_\chi^\ell(m_0,m',c_0,c')}. \label{COstru}
\end{align}
  Inserting this back into (\ref{roleofkeychar}) and open up $\overline{\mathcal{C}_\chi^\ell}(\cdots)$ by its definition, we obtain \begin{align*}
    S_k(\ell; \chi)  \ = \ & \  \frac{2\pi}{q} \frac{\epsilon_\chi}{\sqrt{q}} \, \sum_{c_0, m_0|q^\infty}\sum_{\substack{m'\geq1\\(m',q)=1}}\sum_{\substack{c'\neq0\\ (c',m'q)=1}} \  \frac{\overline{\chi}(m')}{m_0m'}  e_{m'}\left(\ell q\overline{c_0c'm_0}\right)\sumast_{\substack{\alpha\, (m_0)\\ c_0c'\equiv-\alpha q\, (m_0)}}e_{m_0}\left(-\ell\overline{\alpha m'}\right)\chi\left(\frac{\alpha q+c_0c'}{m_0}\right)
    \nonumber\\
    &\hspace{4cm} \times  \,    \int_{0}^{\infty} \ (\mathcal{H}_{k}g)\left(x/q^2\right)   J_{k-1}\left(\frac{4\pi\sqrt{\ell x}}{m_{0}m'}\right) 
e\left(\frac{c_{0}c'x}{qm_{0}m'}\right)  \ dx.
\end{align*}
Upon recombining the $c_{0}$-sum and $c'$-sum via $c=c_0c'$, it follows that
\begin{align}\label{SkBeforePoisson}
    S_k(\ell; \chi)  \ = \ & \  \frac{2\pi}{q} \frac{\epsilon_\chi}{\sqrt{q}}\, \sum_{m_0|q^\infty}\sum_{\substack{m'\geq1\\(m',q)=1}}\sum_{\substack{c\neq0\\ (c,m')=1}} \  \frac{\overline{\chi}(m')}{m_0m'}  e_{m'}\left(\ell q\overline{cm_0}\right)\sumast_{\substack{\alpha\, (m_0)\\ c\equiv-\alpha q\, (m_0)}}e_{m_0}\left(-\ell\overline{\alpha m'}\right)\chi\left(\frac{\alpha q+c}{m_0}\right)
    \nonumber\\
    &\hspace{4cm} \times \,  \int_{0}^{\infty} \ (\mathcal{H}_{k}g)\left(x/q^2\right)   J_{k-1}\left(\frac{4\pi\sqrt{\ell x}}{m_{0}m'}\right) 
e\left(\frac{cx}{qm_{0}m'}\right)  \ dx.
\end{align}
This completes the arithmetic preparation as described in Sect. \ref{applyhankcanc}.

\subsection{Step 6: Second Poisson}\label{reversPois}

With the preparations carried out above,  we rewrite the $c$-sum of (\ref{SkBeforePoisson}) as
\begin{align}\label{csumBeforePoisson}
    \sum_{\substack{c\neq0\\(c,m')=1}}\mathcal{D}_\chi(c;m_0,m') \int_0^\infty \ (\mathcal{H}_{k}g)\left(x/q^2\right) 
J_{k-1}\left(\frac{4\pi\sqrt{\ell x}}{m_0m'}\right)  e\left(\frac{cx}{m_0m'q}\right) \ dx \ = \ T_1 \ - \ T_2,
\end{align}
where \begin{align*}
    T_1 \, &:= \,  \sum_{\substack{c\in\Z}}\mathcal{D}_\chi(c;m_0,m') \, \int_0^\infty\ (\mathcal{H}_{k}g)\left(x/q^2\right)  J_{k-1}\left(\frac{4\pi\sqrt{\ell x}}{m_0m'}\right) e\left(\frac{cx}{m_0m'q}\right) dx,\\
    T_2 \, &:= \, \delta(m'=1)\mathcal{D}_\chi(0;m_0,1)  \, \int_0^\infty\ (\mathcal{H}_{k}g)\left(x/q^2\right) J_{k-1}\left(\frac{4\pi\sqrt{\ell x}}{m_0}\right)  dx,\\
    \mathcal{D}_\chi(c;m_0,m') \, &:= \, \delta((c,m')=1)\overline{\chi}(m')e_{m'}\left(\ell q\overline{cm_0}\right)\sumast_{\substack{\alpha\, (m_0)\\ c\equiv-\alpha q\, (m_0)}}e_{m_0}\left(-\ell\overline{\alpha m'}\right)\chi\left(\frac{\alpha q+c}{m_0}\right).
\end{align*}

The treatment of $T_2$ is simpler. Observe that 
\begin{align*}
    \mathcal{D}_\chi(0;m_0,1)  =  \sumast_{\substack{\alpha\, (m_0)\\ 0\equiv \alpha q\, (m_0)}}e_{m_0}\left(-\ell\overline{\alpha}\right)\chi\left(\frac{\alpha q}{m_0}\right) 
    = \ \delta(m_0=q)\sumast_{\alpha\,(q)}\,\, \overline{\chi}(\alpha)e\left(-\frac{\ell\alpha}{q}\right)
    =  \delta(m_0=q)\sqrt{q}\,\overline{\epsilon_\chi}\chi(\ell).
\end{align*}
A change of variables $x\to q^2 x$ and the Hankel inversion formula (Lem. \ref{hankelinv}) imply that 
\begin{align}\label{T2Eval}
    T_2 \, = \, \delta(m_0=q, m'=1)\, \overline{\epsilon_\chi}\chi(\ell) \, \frac{q^{5/2}g(\ell)}{2\pi}.
\end{align}

For $T_1$,  the character sum $\mathcal{D}_\chi$ can be expressed in terms of Kloosterman sums via \eqref{FirstPoissonCharComp2} and \eqref{FirstPoissonCharComp}:
\begin{align}
    \mathcal{D}_\chi(c;m_0,m')&=  \sumast_{\substack{x\,(m)\\ c\equiv -xq\, (m)}}e_{m}(-\ell \overline{x})\chi\left(\frac{qx+c}{m}\right) 
    =  \frac{\epsilon_\chi}{m\sqrt{q}}\sum_{\gamma\,(mq)}\overline{\chi}(\gamma)\sumast_{x\,(m)}e_m(-\gamma x-\ell\overline{x})e_{mq}(-c\gamma) \nonumber\\
    &= \frac{\epsilon_\chi}{m\sqrt{q}}\sum_{\gamma\,(mq)}\overline{\chi}(\gamma)S(\gamma,\ell;m)e_{mq}(-c\gamma). \label{anotherGauss}
\end{align}
Note: $m=m_0m'$. Plugging (\ref{anotherGauss}) into $T_{1}$ and making a change of variables $x\to mq x$, we arrive at
 \begin{align*}
    T_1=\epsilon_\chi \sqrt{q}\sum_{\substack{c\in\Z}}\sum_{\gamma\,(mq)}\overline{\chi}(\gamma)S(\gamma,\ell;m)e_{mq}(-c\gamma) 
    \int_0^\infty\, (\mathcal{H}_{k}g)\left(mx/q\right) J_{k-1}\bigg(4\pi \sqrt{\frac{\ell q x}{m}}\ \bigg)  e(cx)  dx. 
\end{align*}
We now readily observe the role reversal of the $c$-sum and $m$-sum as discussed in Sect. \ref{geodual}! 

The bounds for the Hankel transform and Bessel functions recorded in Sect. \ref{prelimsec} allow us to apply the dominated convergence theorem and obtain
\begin{align*}
    T_1=\epsilon_\chi \sqrt{q}\sum_{\gamma\,(mq)}\overline{\chi}(\gamma)S(\gamma,\ell;m) \lim_{\epsilon \to 0+} \, \sum_{\substack{c\in\Z}} e_{mq}(-c\gamma)
    \int_{\R}\,(\mathcal{H}_{k}g)\left(mx/q\right) J_{k-1}\bigg(4\pi \sqrt{\frac{\ell q x}{m}}\ \bigg)h_{\epsilon}(x)  e(cx) \ dx,
\end{align*}
where $h_{\epsilon}$ is a smooth function on $\R$ such that $h_{\epsilon}\equiv 1$ on $[\epsilon, \infty)$, $h_{\epsilon}\equiv 0$ on $(-\infty, 0]$, and $0\le h_{\epsilon}\le 1$ on $(0, \epsilon)$. Inside the limit, we apply Poisson summation $(\bmod\, mq)$ to the $c$-sum.  By dominated convergence again,
\begin{align}\label{T1Eval}
    T_1 \ = \ 2\pi\, \epsilon_\chi\sqrt{q}\sum_{c=1}^\infty\overline{\chi}(c)S(c,\ell;m) J_{k-1}\bigg(\frac{4\pi\sqrt{\ell c}}{m}\bigg) \int_{0}^{\infty}   g(y) J_{k-1}\left(\frac{4\pi\sqrt{cy}}{q}\right)dy
\end{align}
upon taking the limit $\epsilon\to 0+$.
Inserting \eqref{T2Eval}--(\ref{T1Eval}) into \eqref{SkBeforePoisson}--\eqref{csumBeforePoisson}, we readily observe that 
\begin{align}\label{SkAfter2ndPoisson}
    S_k(\ell; \chi)  \, = \, & \ 4\pi^2 \frac{\epsilon_\chi^2}{q}\sum_{m=1}^\infty\frac{1}{m}\sum_{c=1}^\infty \overline{\chi}(c)S(c,\ell;m) J_{k-1}\bigg(\frac{4\pi\sqrt{\ell c}}{m}\bigg) \int_{0}^{\infty} g(y) J_{k-1}\left(\frac{4\pi\sqrt{cy}}{q}\right)dy \, - \, g(\ell)\chi(\ell) 
\end{align}
by combining the $m_0$-sum and $m'$-sum.


\subsection{Step 7:  Petersson in reverse}

Inserting \eqref{SkAfter2ndPoisson} back into \eqref{dualPoisspl} yields \begin{align}
    I_{k}(\ell; \chi )  \, = & \,  2\pi i^{-k}\frac{\epsilon_\chi^2}{q}\overline{\chi}(\ell)	\int_{0}^{\infty}   g(y)J_{k-1}\bigg(\frac{4\pi\sqrt{y\ell}}{q}\bigg)  \, dy\nonumber\\
    &\ + \ 4\pi^2 \frac{\epsilon_\chi^2}{q}\sum_{m=1}^\infty\frac{1}{m}\sum_{c=1}^\infty \overline{\chi}(c)S(c,\ell;m) J_{k-1}\bigg(\frac{4\pi\sqrt{\ell c}}{m}\bigg)\int_{0}^{\infty}   g(y) J_{k-1}\left(\frac{4\pi\sqrt{cy}}{q}\right)dy.
\end{align}
Applying the Petersson formula \eqref{Peter} in reverse to the $m$-sum, Thm. \ref{ourmainthmtwist} follows.

\begin{remark}
Readers should note that the diagonal term $g(\ell)\chi(\ell)$ of (\ref{finiPet}) from the initial application of the Petersson trace formula conveniently cancelled with the dual zeroth frequency from the Poisson summation. Only after this does the crucial ``role-reversal'' of sums takes place. 
\end{remark}

\section{Proof of Corollary \ref{maincor}}\label{proofmainisola}

In App. \ref{AC&PGsec}, we show that the $L$-series (\ref{holoDS}) can be analytically continued to $\re s> -(k-6)/2$, and 
\begin{align}\label{avgACcont}
\mathcal{A}_{\ell}(s, \chi) \ := \  \sideset{}{^h}{\sum}_{f \in \mathcal{B}_{k}(1)} \ \lambda_{f}(\ell) L(s,f\times \chi)
\end{align}
has \emph{polynomial growth} on the vertical strip $0< \re s<1$ and as $|\im s|\to \infty$. Granting this (i.e., Prop. \ref{prelimbd}), we prove Cor. \ref{maincor} as follows. Pick any $g\in C_{c}^{\infty}(0,\infty)$. Then its Mellin transform $\mathcal{G}(s)$ is entire and it follows from repeated integration by parts that  for any $A>0$, 
\begin{align}\label{Gprop}
    |\mathcal{G}(s)| \ \ll_{A, \, \re s} \  (1+ |\im s|)^{-A}.
\end{align}

Let $\sigma \in (0,1)$, $\ell\geq1$ and $k\ge 6$. In (\ref{truncIsum}), apply (\ref{melT}) and rearrange sums and integrals, we have
\begin{align}
 I_k(\ell; \chi)  
\ &= \   \, \int_{(3/2)} \, \mathcal{G}(s) \ \sideset{}{^h}{\sum}_{f \in \mathcal{B}_{k}(1)} \, \lambda_{f}(\ell) L(s, f\times \chi) \, \frac{ds}{2\pi i}. \nonumber
\end{align} 
By Prop. \ref{prelimbd} and (\ref{Gprop}),  we may shift the line of integration above to $\re s =\sigma$. 

Next, it follows from (\ref{bessgamm}) and  (\ref{melT}) that
\begin{align}\label{Hankmel}
    \int_{0}^{\infty} \,  g(y)J_{k-1}\left(\frac{4\pi\sqrt{cy}}{q}\right)  dy \, = \,  \frac{1}{2\pi}\int_{(3/2)}\mathcal{G}(s)\frac{\gamma_k(1-s)}{\gamma_k(s)}\left(\frac{c}{q^2}\right)^{s-1}\frac{ds}{2\pi i}.
\end{align}
Using (\ref{Gprop}), (\ref{ratiostir}) and the holomorphy of $\mathcal{G}(s)$, we may shift the line of integration to $\re s = -1/2$ in (\ref{Hankmel}). Inserting the resultant into the right-hand side of (\ref{mainglobid}), we deduce that
\begin{align*}
    I_k(\ell; \chi) \, = \, i^k \epsilon_\chi^2 \,\sum_{n=1}^\infty \la_f(n)\overline{\chi}(n) \, \sideset{}{^h}{\sum}_{f \in \mathcal{B}_{k}(1)} \ \lambda_{f}(\ell)\int_{(-1/2)} \ \mathcal{G}(s)   \,  \frac{\gamma_{k}(1-s)}{\gamma_{k}(s)} \left(\frac{n}{q^2}\right)^{s-1} \frac{ds}{2\pi i}.
\end{align*}
Upon exchanging the order of sums and integrals, we find that the $c$-sum converges absolutely and is precisely the Dirichlet series $L(1-s, f\times \overline{\chi})$. In other words,  
\begin{align}\label{secsum}
    I_k(\ell; \chi)= \ i^k \epsilon_\chi^2\int_{(-1/2)} \ \mathcal{G}(s)q^{1-2s}  \frac{\gamma_{k}(1-s)}{\gamma_{k}(s)}  \sideset{}{^h}{\sum}_{f \in \mathcal{B}_{k}(1)} \ \lambda_{f}(\ell) L(1-s, f\times \overline{\chi}) \, \frac{ds}{2\pi i}.
\end{align}
By Prop. \ref{prelimbd}, (\ref{Gprop}) and (\ref{ratiostir}), we may shift the line of integration for (\ref{secsum}) back to $\re s= \sigma$.  As a result, we have
\begin{align}
        \int_{(\sigma)} \, \mathcal{G}(s)   \left\{  \ \ 	\sideset{}{^h}{\sum}_{f \in \mathcal{B}_{k}(1)} \ \lambda_{f}(\ell)\left( L(s, f\times\chi)  \, -   \, i^k \epsilon_\chi^2q^{1-2s}\,  \frac{\gamma_{k}(1-s)}{\gamma_{k}(s)}    \,  L(1-s, f\times \overline{\chi}) \right) \right\} \,  \frac{ds}{2\pi i} \, = \,  0.
    \end{align}



Let $\mathcal{B}_{k}(1):=\left\{ f_{1}, \ldots, f_{d}\right\}$. \footnote{ Here, we use the finite dimensionality of the linear space of holomorphic cusp forms of a given weight and level. } The vectors $ \left( \lambda_{f_{i}}(1), \,\lambda_{f_{i}}(2),  \, \ldots \,  \right)$ for $i=1,\ldots, d$ are linearly independent over $\C$, and thus, there exists $\ell_{1}< \cdots < \ell_{d}$ such that the submatrix $A:= \left(\lambda_{f_{i}}(\ell_{j})\right)_{\substack{1\le i, j\le d}}$ is invertible. By Prop. \ref{prelimbd},  the function $s  \mapsto  \mathcal{A}_{\ell_{j}}(s, \chi)$
admits a holomorphic continuation (say $G_{\ell_{j}}(s, \chi)$)  to the region  $\re s>-(k-6)/2$ for $j=1,\ldots, d$:
\begin{align}\label{ACvecto}
    \left( \frac{L(s,f_{1}\times \chi)}{||f_{1}||^2}, \ \ldots, \ \frac{L(s,f_{d}\times \chi)}{||f_{d}||^2} \right)  \ =  \  \frac{(4\pi)^{k-1}}{\Gamma(k-1)}\left(G_{\ell_{1}}(s, \chi), \, \ldots, \, G_{\ell_{d}}(s, \chi)\right)A^{-1}.
\end{align}
Hence, each of $L(s, f_{i}\times \chi)$ admits a holomorphic continuation to the same region.

Since (\ref{mainglobid}) holds for any $\sigma\in (0,1)$ and any  $g\in C^{\infty}_{c}(0, \infty)$, we have, on the vertical strip $0<\re s <1$, 
\begin{align*}
     \left( \frac{L(s, f_{i} \times \chi)}{||f_{i}||^2} - i^k \epsilon_\chi^2q^{1-2s}  \frac{\gamma_{k}(1-s)}{\gamma_{k}(s)} \, \frac{L\left(1-s, f_{i}\times \overline{\chi}\right)}{||f_{i}||^2}\right)_{1\le i\le d} \, \cdot \,  A \ = \  (0,\ldots, 0). 
\end{align*}
Since $A$ is invertible, the functional equation (\ref{stdfunceq})  for $f=f_{i}$ on $0<\re s <1$ follows immediately. 


From the holomorphic continuation of $L(s, f_{i} \times \chi)$ to $\re s> -(k-6)/2$ and the functional equation just proved, $L(s, f_{i} \times \chi)$ also extends holomorphically to $\re s< k/2-2$ . As a result, $L(s, f_{i}\times \chi)$ admits an entire continuation and now the functional equation holds for all $s\in \C$. This completes the proof. 

\appendix

\section{Analytic continuation and polynomial growth}\label{AC&PGsec}

The proof of Prop. \ref{prelimbd} below is similar to that in \cite{KLev},  but requires several adjustments to accommodate our current setting. For completeness and convenience of the reader, we supply a short argument as follows.   
 
\begin{prop}\label{prelimbd}
Let  $\ell\ge 1$ and $k\ge 6$ be integers.   Then the function $\mathcal{A}_{\ell}(s, \chi)$  defined in (\ref{avgACcont}) admits a holomorphic continuation to the half-plane \,$\re s> -(k-6)/2$ and satisfies the estimate 
    \begin{align}\label{preast}
    \mathcal{A}_{\ell}(s, \chi) \  \ll \ (1+|t|)^{k-3}
\end{align}
for any $s=\sigma+it$ with\,  $\sigma > -(k-6)/2 $ and $t\in \R$. The implicit constant depends only on $k, \ell, q, \sigma$. 

\end{prop}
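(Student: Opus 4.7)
The plan is to run the Petersson--Poisson maneuver of Sects.\ \ref{Petersect}--\ref{sect:step2} at the level of the Dirichlet series (\ref{avgACcont}) itself, rather than against a fixed test function $g$. Starting from
\[
  \mathcal{A}_\ell(s,\chi) \ = \ \sum_{n=1}^\infty \frac{\chi(n)}{n^s}\, \sideset{}{^h}{\sum}_{f \in \mathcal{B}_k(1)} \lambda_f(\ell)\lambda_f(n),
\]
valid absolutely for $\re s > 1$, I would insert a smooth dyadic partition of unity $1 = \sum_{X} V(n/X)$ with $V \in C_c^\infty(0,\infty)$ running over a dyadic scale, so that each block admits a legitimate application of Lem.\ \ref{Peterform} and Lem.\ \ref{lemPois}. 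Lem.\ \ref{Peterform} yields a diagonal piece $\chi(\ell)\ell^{-s} V(\ell/X)$ together with a Kloosterman term $\sum_c c^{-1} \sum_n \chi(n) V(n/X) n^{-s} S(n,\ell;c) J_{k-1}(4\pi\sqrt{n\ell}/c)$.

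Next, applying Poisson summation $\bmod\,cq$ to the $n$-sum and invoking the Gauss-sum collapse of (\ref{FirstPoissonCharComp}) (which only uses primitivity of $\chi$), the character sum reduces to one of the shape $\sqrt{q}\epsilon_\chi\,\overline{\chi}((qx+m)/c)$ times a congruence condition on the dual frequency $m$. What remains to be controlled is the oscillatory integral
\[
  I_{c,m,X}(s) \ := \ \int_0^\infty V(y/X)\, y^{-s}\, J_{k-1}\!\left(\tfrac{4\pi\sqrt{y\ell}}{c}\right) e\!\left(-\tfrac{my}{cq}\right) dy
\]
summed against an explicit arithmetic weight of size $O(c^{1+\epsilon})$. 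I would expand $J_{k-1}$ via the oscillatory asymptotic (\ref{asympJbess}) and then apply Lem.\ \ref{BKYibp} to the phase $h(y) = \pm 2\sqrt{y\ell}/c - my/(cq) - (t/2\pi)\log y$ where $s = \sigma + it$. The stationary point (when it exists) sits near $y_0 \asymp q^2\ell/m^2$, with a $t$-dependent correction, and a careful stationary-phase/non-stationary dichotomy produces a bound for $I_{c,m,X}(s)$ that is \emph{uniform in $X$} and decays rapidly in $|m|$ and $c$ away from the stationary regime.

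Summing these estimates in $c \ge 1$ and $m \neq 0$ yields absolute convergence precisely in the range $\sigma > -(k-6)/2$: the exponent $(k-6)/2$ arises from balancing the $|m|$-decay of $I_{c,m,X}$ against the $c^{1+\epsilon}$ growth of the character weight and the $c^{-1}$ from the Kloosterman normalisation. Passing to the limit $X \to \infty$ gives the holomorphic continuation of $\mathcal{A}_\ell(s,\chi)$; simultaneously, tracking the $t$-dependence in Lem.\ \ref{BKYibp} with enough iterated integrations by parts, combined with the Stirling behaviour of the gamma factors implicit in (\ref{bessgamm}), yields the polynomial bound (\ref{preast}).

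The main technical obstacle is organising the stationary-phase analysis to land on exactly the sharp pair $(\sigma > -(k-6)/2,\ (1+|t|)^{k-3})$. Concretely, one must carefully split the $(c,m)$-range into the non-stationary regime $|m|/cq \gg \sqrt{\ell/c^2}$ (handled by iterated integration by parts, gaining arbitrary powers of $|m|/cq$) and the stationary regime (handled by the local asymptotic expansion, giving a precise power of the $t$-shifted stationary point). The hypothesis $k \ge 6$ is what is required for the resulting $c$-sum to converge after summing over $m$, exactly mirroring the convergence issue addressed in Sect.\ \ref{Hankana}, and it replaces the extra spectral-growth hypothesis needed in Venkatesh's approach discussed in Sect.\ \ref{ACPG}.
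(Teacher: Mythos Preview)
Your proposal is essentially the same approach as the paper's: dyadic decomposition, Petersson trace formula, Poisson summation $\bmod\, cq$ with the Gauss-sum collapse of (\ref{FirstPoissonCharComp}), and then an oscillatory-integral analysis of the resulting archimedean piece via (\ref{asympJbess}) and Lem.~\ref{BKYibp}. Two points of execution differ and are worth noting. First, the paper splits the $c$-sum at $c \asymp \sqrt{\ell X}$: for large $c$ the Bessel argument is small and (\ref{asympJbess}) is useless, so the paper instead invokes the Mellin--Barnes representation (Lem.~\ref{Melexpthm}) together with repeated integration by parts --- you do not mention this regime, and without it the $c$-sum would not converge. Second, the paper does \emph{not} carry out a stationary/non-stationary dichotomy: once the dyadic scale satisfies $X > 2q^2\ell$, the putative stationary point $y_0 \asymp q^2\ell/m^2$ lies outside $\supp G_s$, so only the non-stationary bound of Lem.~\ref{BKYibp} is needed; moreover the paper keeps $y^{-it}$ in the amplitude $w_s$ (contributing the factors $(1+|t|)^r$ via $|G_s^{(r)}| \ll (1+|t|)^r$) rather than in the phase. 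These simplifications are what deliver the clean pair $(\sigma > -(k-6)/2,\ (1+|t|)^{k-3})$ directly, without the balancing act you describe.
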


\begin{proof}
Take $g\in C_{c}^{\infty}[1,2]$ such that $1 =   \sum_{u\in \Z} \ g(x/2^u)$ for any $x>0$. Inserting this into (\ref{holoDS}) for $\re s\gg 1$,
observe that (\ref{avgACcont}) can be written as
\begin{align}
 \mathcal{A}_{\ell}(s,\chi)   =    \sum_{u=-1}^{\infty} \frac{\mathcal{I}_{s}(2^u;\ell,\chi)}{2^{us}}, \hspace{10pt}\text{ where } \hspace{10pt}   \mathcal{I}_{s}(X;\ell, \chi)   :=  \sideset{}{^h}{\sum}_{f \in \mathcal{B}_{k}(1)} \ \lambda_{f}(\ell)  \sum_{n} \ \lambda_{f}(n)\chi(n)G_{s}(n/X),
\end{align} 
and  $ G_{s}(y):=  y^{-s} g(y)$.  For  $X> 2q^2\ell$, we have $G_{s}(\ell/X)=0$, and from  (\ref{dualPoisspl})--(\ref{SkDef}), it follows that
\begin{align*}
    \frac{1}{X} \, \mathcal{I}_{s}(X;\ell, \chi) \ll   \bigg|\int_{0}^{\infty} \, G_{s}(y)J_{k-1}\bigg(\frac{4\pi\sqrt{yX\ell}}{q}\bigg)  dy \bigg|  + \sum_{c\ge 1} \sum_{\substack{m\neq0 }} \  \bigg|\int_{0}^{\infty} G_{s}(y)  J_{k-1}\bigg(\frac{4\pi\sqrt{yX\ell}}{c}\bigg) e\bigg(-\frac{myX}{cq}\bigg) dy\bigg|.
\end{align*}
Bounding the second summand on the right-hand side is harder, and this will be our focus. 

We split the $c$-sum above into two parts, according to the conditions  $c>\sqrt{\ell X}/20$ and  $c\le \sqrt{\ell X}/20$.  These two parts are denoted by $ \mathcal{I}^{(2)}_{s, >}(X;\ell, \chi)$ and $\mathcal{I}^{(2)}_{s, \le}(X; \ell, \chi)$ respectively. 

By Lem. \ref{Melexpthm}, we have
\begin{align}
    \int_{\R} G_{s}(y)  J_{k-1}\bigg(\frac{4\pi\sqrt{yX\ell}}{c}\bigg)  e\bigg(-\frac{myX}{cq}\bigg)  dy =      \frac{1}{2\pi}    \int_{(a)}  \frac{\gamma_{k}(1-v)}{\gamma_{k}(v)}   \bigg(\frac{\sqrt{X\ell }}{c}\bigg)^{2(v-1)}\int_{\R}  G_{s}(y)    y^{v-1}  e\bigg(-\frac{myX}{cq}\bigg)  dy \frac{dv}{2\pi i}, \nonumber
\end{align}
where $1< a<(k+1)/2$.  Then integrating by parts $r\ge 2$ times, it follows that the above expression is
\begin{align}
  \ \ll \  \left(\frac{\sqrt{\ell X}}{c}\right)^{2(a-1)}  \bigg(\frac{|m|X}{cq}\bigg)^{-r} \, \int_{(a)} \ \left|\frac{\gamma_{k}(1-v)}{\gamma_{k}(v)}  \right| \, \int_{\R} \ \left|\frac{d^r}{dy^r} \left[G_{s}(y)y^{v-1}\right]\right| \,  dy \  |dv|. \nonumber
\end{align}
Let $v= a +i\tau$ and $s= \sigma+it$ with $\sigma, t, \tau \in \R$. Then Stirling's formula gives
\begin{align}\label{ratiostir}
    \left|\frac{\gamma_{k}(1-v)}{\gamma_{k}(v)}  \right| \ \asymp_{k, a} \ (1+ |\tau|)^{1-2a}. 
\end{align}
For any $r\ge 2$ and $y\in [1,2]$,  we have 
\begin{align}
 \max_{y\in \R}  \,  \left|G_{s}^{(r)}(y)\right| \ &\ll_{r} \ \max_{\substack{i+j=r \\ i,j\ge 0 }} \ |g^{(i)}(y)| \, \cdot \, | s(s+1)\cdots (s+j-1)| y^{-\sigma-j}   \ \ll_{r, \sigma} \ \left(1 + |t|\right)^r, \nonumber\\
    \left|\frac{d^r}{dy^r} [G_{s}(y)y^{v-1}]\right| &\ \ll_{r} \ \max_{\substack{i+j=r\\ i,j\ge 0 }} \ |G_{s}^{(i)}(y)| \cdot | (v-1)\cdots (v-j) y^{v-j-1} | 
    \ \ll_{a, r, \sigma} \ \left(\left(1 + |t|\right) \left(1 + |\tau|\right)\right)^r. 
\end{align}
Hence, if  $1< a<(k+1)/2$ and $r\in \Z$ such that $2\le r< 2a-3$, then 
\begin{align}\label{largecest}
    \bigg| \int_{\R} \, G_{s}(y)  J_{k-1}\bigg(\frac{4\pi\sqrt{yX\ell }}{c}\bigg)  e\bigg(-\frac{myX}{cq}\bigg) \, dy\bigg| 
    \ \ll  \    \left(1+ |t|\right)^r \frac{X^{a-r-1}}{|m|^r c^{2a-2-r}},
\end{align}
\begin{align}\label{smaest}
    \mathcal{I}^{(2)}_{s, >}(X;\ell, \chi) \ \ll_{\ell, q, k, \sigma, r, a}  \ \left(1+ |t|\right)^r \frac{X^{a-r-1}}{(\sqrt{X})^{2a-3-r}} \ =  \ (1+|t|)^r X^{(1-r)/2}.
\end{align}

Next, we estimate $\mathcal{I}^{(2)}_{s, \le}(X; \ell, \chi)$ with  $X> 2q^2\ell$. By (\ref{asympJbess}),  it suffices to estimate the oscillatory integral:  
 \begin{align}
     \int_{\R} \, G_{s}(y)  W_{k}\bigg(\frac{2\sqrt{y\ell X}}{c}\bigg) e\bigg(\frac{2q\sqrt{y\ell X}-mXy}{cq}\bigg)  dy. \nonumber
 \end{align}
To this end, we make use of  Lem. \ref{BKYibp} with the functions
 \begin{align*}
     w_{s}(y) \, := \, G_{s}(y)  W_{k}\bigg(\frac{2\sqrt{y\ell X}}{c}\bigg)  \hspace{20pt} \text{ and } \hspace{20pt} h(y) \, := \, \frac{2q\sqrt{y\ell X}-mXy}{cq}. 
 \end{align*}
Suppose  $y\in [1, 2]$ and $r\ge 2$. We observe the following bounds:
\begin{align*}
    h'(y) = \frac{qy^{-1/2}\sqrt{\ell X} -mX}{cq},  \hspace{10pt} |h'(y)|\gg \frac{|m|X}{cq} \bigg(1- \frac{1}{|m|} \sqrt{\frac{q^2\ell}{X}}\bigg) \gg  \frac{|m|X}{cq},  \hspace{10pt} 
    |h^{(r)}(y)|  \asymp_{r}  \frac{\sqrt{\ell X}}{c}   \ge  20,
\end{align*}
 \begin{align}\label{weightfbd}
     \left|w_{s}^{(r)}(y)\right| 
     \ &\ll_{r,\sigma} \,  \max_{i+j=r} \,  \big| G_{s}^{(i)}(y)\big|  \max_{\substack{m_{1}+ 2m_{2} +\cdots +j\cdot m_{j}=j \\ m_{1},\ldots, m_{j}\ge 0}} \, \big|W_{k}^{(m_{1}+\cdots + m_{j})}\big|\bigg(\frac{2\sqrt{y\ell X}}{c}\bigg) \, \cdot \, \prod_{i=1}^{j}   \bigg| \partial_{y}^i \bigg(\frac{2\sqrt{y\ell X}}{c}\bigg) \bigg|^{m_{i}} \nonumber\\
     \ &\ll_{r,k} \, (1 + |t|)^r \bigg(\frac{2\sqrt{\ell X}}{c}\bigg)^{-1/2}, 
 \end{align}
  where Leibniz's rule and  Fa\`{a} di Bruno's formula were used. Apply Lem. \ref{BKYibp} with the parameters:
\begin{align}
    W \ = \  \bigg(\frac{2\sqrt{\ell X}}{c}\bigg)^{-1/2}, \hspace{10pt} V \ = \  \left(1+|t|\right)^{-1}, \hspace{10pt} H \ = \  \frac{\sqrt{\ell X}}{c},  \hspace{10pt} G \ = \  1, \hspace{10pt} R \ = \ \frac{|m|X}{cq},
\end{align}
we conclude that: 
\begin{align}\label{smallc}
    \bigg| \int_{\R} \, w_{s}(y) e\left(h(y)\right) \, dy \bigg| \ &\ll \   \,\bigg(\frac{2\sqrt{\ell X}}{c}\bigg)^{-1/2} \left(1+|t|\right)^{A} \bigg( \, \frac{cq}{|m|X} \ + \ \frac{\sqrt{\ell X}}{c} \bigg(\frac{cq}{|m|X}\bigg)^2 \,  \bigg)^{A} \nonumber\\
    \ &\ll \   \, \bigg(\frac{\sqrt{\ell X}}{c}\bigg)^{-1/2} \left(\frac{cq\left(1+|t|\right)}{|m|X}\right)^{A} \bigg(1 \ + \ \frac{1}{|m|}\sqrt{\frac{q^2\ell}{X}}  \bigg)^{A}
    \ \ll \ \frac{\sqrt{c}}{X^{1/4}}  \left(\frac{c\left(1+|t|\right)}{|m|X}\right)^{A}
\end{align} 
for $c>0$, $m\neq 0$, $X>2q^2\ell$, where the implicit constants depend on $k,\ell, q, A,\sigma$.  When $A>1$, we have
 \begin{align}\label{larest}
    \mathcal{I}^{(2)}_{s, \le}(X; \ell, \chi) \ \ll \ X^{-1/4}  \, \bigg(\frac{\left(1+|t|\right)}{X}\bigg)^{A} \ \sum_{c\ll \sqrt{X}}  \ c^{A+1/2} \ \ll \ \sqrt{X} \, \bigg(\frac{\left(1+|t|\right)}{\sqrt{X}}\bigg)^{A}. 
\end{align}

As a result, we obtain the estimate:
\begin{align*}
     \frac{1}{X} \, \mathcal{I}_{s}(X;\ell, \chi) \ \ll \ (1+|t|)^r X^{(1-r)/2} \ + \ \sqrt{X} \, \bigg(\frac{\left(1+|t|\right)}{\sqrt{X}}\bigg)^{A},
\end{align*}
provided that $1< a<(k+1)/2$, $r\in \Z$ with $2\le r< 2a-3$, $X>2q^2\ell$,  and $A>1$. By taking $r=k-3$, $a= (k+1)/2-\epsilon$, $A=k-3$ with $k\ge 6$, we have
\begin{align}
     \mathcal{I}_{s}(X;\ell, \chi) \ \ll_{k, \ell, q, \sigma} \ \left(1+|t|\right)^{k-3} X^{-(k-6)/2}. \label{larbd} 
\end{align}
On the strip $\sigma_{1} \le \sigma\le \sigma_{2}$, the same estimates hold with the implicit constants depend only on $k, \ell, q, \sigma_{1}, \sigma_{2}$. 


 We now turn to $\mathcal{A}_{\ell}(s,\chi)$. For $\re s\gg 1$, we have
    \begin{align}\label{dyaspl}
  \mathcal{A}_{\ell}(s,\chi)  =    \bigg(\, \sum_{\substack{2^u> 2\ell \\ u\ge -1} } \, + \, \sum_{\substack{2^u\le  2\ell \\ u\ge -1} } \, \bigg) \  \frac{\mathcal{I}_{s}(2^u;\ell,\chi)}{2^{us}} 
\end{align}
Apply (\ref{larbd})  to the first sum of (\ref{dyaspl}), we have, for $\sigma > -(k-6)/2$ and $t\in \R$,
\begin{align}\label{largell}
    \sum_{\substack{2^u> 2\ell \\ u\ge -1} } \ \bigg|\frac{\mathcal{I}_{s}(2^u;\ell, \chi)}{2^{us}}\bigg| \ \ll \  \left(1+|t|\right)^{k-3} \sum_{\substack{u\ge 0} } \frac{ (2^u)^{-\frac{k-6}{2}}}{2^{u\sigma}} \ \ll_{k, \ell, q, \sigma} \ \left(1+|t|\right)^{k-3}.
\end{align}
Next, it follows from Cauchy--Schwarz's inequality and bounding the geometric side of (\ref{Peter}) that
\begin{align}\label{smallell}
   \hspace{-5pt} \sum_{\substack{2^u\le  2\ell \\ u\ge -1} } \  \bigg|\frac{\mathcal{I}_{s}(2^u;\ell, \chi)}{2^{us}} \bigg|  \ll     \sum_{n\le 4\ell} \ \  \sideset{}{^h}{\sum}_{f \in \mathcal{B}_{k}(1)} \  \ \left|\lambda_{f}(\ell)\lambda_{f}(n)\right|n^{-\sigma} 
     \ll  \sum_{n\le 4\ell} \ \  \sideset{}{^h}{\sum}_{f \in \mathcal{B}_{k}(1)} \ \left(|\lambda_{f}(\ell)|^2 + |\lambda_{f}(n)|^2\right)  \ll_{k,\ell, q}  1.
\end{align}

From (\ref{largell}) and (\ref{smallell}), the series on the right side of (\ref{dyaspl}) converge absolutely pointwisely and uniformly on every compact subset of the region $\re s >-(k-6)/2$. Thus, $\mathcal{A}_{\ell}(s,\chi)$ admits an analytic continuation and (\ref{preast}) holds in the same region.  
\end{proof}

\printbibliography

\end{document}